

\documentclass[aos,preprint]{imsart} 

\RequirePackage[OT1]{fontenc}
\RequirePackage{amsthm,amsmath}
\RequirePackage[numbers]{natbib}
\RequirePackage[colorlinks,citecolor=blue,urlcolor=blue]{hyperref}

\usepackage{mathtools}
\usepackage{booktabs}
\usepackage[english]{babel} 
\usepackage{amsmath,amsfonts,amsthm}
\usepackage{ amssymb }
\usepackage{graphicx,psfrag,epsf}
\usepackage{array}
\usepackage{booktabs} 
\usepackage{natbib}
\usepackage{floatrow}
\usepackage{subfloat}
\usepackage{caption}
\bibliographystyle{apalike}

\usepackage{multirow}
\usepackage{epstopdf}
\usepackage{enumitem}
\usepackage{subfig}
\usepackage{color}
\usepackage{tabularx}
\usepackage[font = small,labelfont=bf,textfont=it]{caption} 
\usepackage{footnote}
\usepackage{algorithm}
\usepackage[noend]{algpseudocode}
\usepackage{rotating}

\makeatletter
\setlist[itemize]{leftmargin=*}
\newcommand{\distas}[1]{\mathbin{\overset{#1}{\kern\z@\sim}}}%
\newcommand{\bm}[1]{\mathbf{#1}}
\newsavebox{\mybox}\newsavebox{\mysim}
\newcommand{\distras}[1]{%
  \savebox{\mybox}{\hbox{\kern3pt$\scriptstyle#1$\kern3pt}}%
  \savebox{\mysim}{\hbox{$\sim$}}%
  \mathbin{\overset{#1}{\kern\z@\resizebox{\wd\mybox}{\ht\mysim}{$\sim$}}}%
}
\newtheorem{theorem}{Theorem}

\newtheorem{definition}{Definition}
\newtheorem{proposition}{Proposition}
\newtheorem{lemma}{Lemma}

\newtheorem{corollary}{Corollary}
\newcolumntype{C}[1]{>{\centering\let\newline\\\arraybackslash\hspace{0pt}}m{#1}}

\setlength\heavyrulewidth{1.5pt} 

\newcommand{\be}{\begin{equation}}
\newcommand{\ee}{\end{equation}}
\newcommand{\bi}{\begin{itemize}}
\newcommand{\ei}{\end{itemize}}
\newcommand{\ben}{\begin{enumerate}}
\newcommand{\een}{\end{enumerate}}
\newcommand{\stb}{\Statex $\bullet$ \;}
\newcommand{\crd}[1]{{\color{black}{#1}}}

\newfloatcommand{capbtabbox}{table}[][\FBwidth]
\allowdisplaybreaks
\DeclareMathOperator*{\argmin}{\arg\!\min}
\newcommand\smallO{
  \mathchoice
    {{\scriptstyle\mathcal{O}}}
    {{\scriptstyle\mathcal{O}}}
    {{\scriptscriptstyle\mathcal{O}}}
    {\scalebox{.7}{$\scriptscriptstyle\mathcal{O}$}}
  }
\makeatother

\let\oldbibliography\thebibliography
\renewcommand{\thebibliography}[1]{\oldbibliography{#1}
\setlength{\itemsep}{0pt}} 



%
%

\begin{document}

\begin{frontmatter}
\title{Support points}
\runtitle{Support points}

\begin{aug}
\author{\fnms{Simon} \snm{Mak} \ead[label=e1]{smak6@gatech.edu}} and
\author{\fnms{V. Roshan} \snm{Joseph} \ead[label=e2]{roshan@gatech.edu}}

\affiliation{Georgia Institute of Technology}
\runauthor{S. Mak and V. R. Joseph}

\end{aug}

\begin{abstract}
\; This paper introduces a new way to compact a continuous probability distribution $F$ into a set of representative points called support points. These points are obtained by minimizing the energy distance, a statistical potential measure initially proposed by Sz\'ekely and Rizzo (2004) for testing goodness-of-fit. The energy distance has two appealing features. First, its distance-based structure allows us to exploit the duality between powers of the Euclidean distance and its Fourier transform for theoretical analysis. Using this duality, we show that support points converge in distribution to $F$, and enjoy an improved error rate to Monte Carlo for integrating a large class of functions. Second, the minimization of the energy distance can be formulated as a difference-of-convex program, which we manipulate using two algorithms to efficiently generate representative point sets. In simulation studies, support points provide improved integration performance to both Monte Carlo and a specific Quasi-Monte Carlo method. Two important applications of support points are then highlighted: (a) as a way to quantify the propagation of uncertainty in expensive simulations, and (b) as a method to optimally compact Markov chain Monte Carlo (MCMC) samples in Bayesian computation.
\end{abstract}

\begin{keyword}[class=MSC]
\kwd{62E17}
\end{keyword}

\begin{keyword}
Bayesian computation, energy distance, Monte Carlo, numerical integration, Quasi-Monte Carlo, representative points.
\end{keyword}

\end{frontmatter}

\section{Introduction}
\label{sec:intro}

This paper explores a new method for compacting a continuous probability distribution $F$ into a set of representative points (rep-points) for $F$, which we call \textit{support points}. Support points have many important applications in a wide array of fields, because these point sets provide an improved representation of $F$ compared to a random sample. One such application is to the ``small-data'' problem of uncertainty propagation, where the use of support points as simulation inputs can allow engineers to quantify the propagation of input uncertainty onto system output at minimum cost. Another important application is to ``big-data'' problems encountered in Bayesian computation, specifically as a tool for compacting large posterior sample chains from Markov chain Monte Carlo (MCMC) methods \citep{Gey1992}. In this paper, we demonstrate the theoretical and practical effectiveness of support points for the general problem of integration, and illustrate its usefulness for the two applications above.

We first outline two classes of existing methods for rep-points. The first class consists of the so-called \textit{mse-rep-points} (see, e.g., Chapter 4 of \cite{FW1993}), which minimize the expected distance from a random point drawn from $F$ to its closest rep-point. Also known as principal points \citep{Flu1990}, mse-rep-points have been employed in a variety of statistical and engineering applications, including quantizer design \citep{GL2000, Pea2004} and optimal stratified sampling \citep{Dal1950, Cox1957}. In practice, these rep-points can be generated by first performing k-means clustering \citep{Llo1982} on a large batch sample from $F$, then taking the converged cluster centers as rep-points. One weakness of mse-rep-points, however, is that they do not necessarily converge to $F$ (see, e.g., \cite{Zad1982,Su2000}). The second class of rep-points, called \textit{energy rep-points}, aims to find a point set which minimizes some measure of statistical potential. Included here are the minimum-energy designs in \citep{Jea2015a} and the minimum Riesz energy points in \citep{Bea2014}. While the above point sets converge in distribution to $F$, its convergence rate is quite slow, both theoretically and in practice \citep{Bea2014}. Moreover, the construction of such point sets can be computationally expensive in high dimensions.
	
The key idea behind support points is that it {optimizes} a specific potential measure called the \textit{energy distance}, which makes such point sets a type of energy rep-point. First introduced in \cite{SR2004}, the energy distance was proposed as a computationally efficient way to evaluate goodness-of-fit (GOF), compared to the classical Kolmogorov-Smirnov (K-S) statistic \cite{Kol1933}, which is difficult to evaluate in high-dimensions. Similar to the existing energy rep-points above, we show in this paper that support points indeed converge in distribution to $F$. In addition, we demonstrate the improved error rate of support points over Monte Carlo for integrating a large class of functions. The minimization of this distance can also be formulated as a difference-of-convex (d.c.) program, which allows for efficient generation of support points.

Indeed, the \textit{reverse-engineering} of a GOF test forms the basis for state-of-the-art integration techniques called {Quasi-Monte Carlo} (QMC) methods (see \cite{DP2010} and \cite{Dea2013} for a modern overview). To see this, first let $g$ be a differentiable integrand, and let $\{\bm{x}_i\}_{i=1}^n$ be the point set (with empirical distribution, or e.d.f., $F_n$) used to approximate the desired integral $\int g(\bm{x}) \; dF(\bm{x})$ with the sample average $\int g(\bm{x}) \; dF_n(\bm{x})$. For simplicity, assume for now that $F = U[0,1]^p$ is the uniform distribution on the $p$-dimensional hypercube $[0,1]^p$, the typical setting for QMC. The Koksma-Hlawka inequality (see, e.g., \citep{Nie1992}) provides the following upper bound on the integration error $I$:
\small
\begin{equation}
I(g;F,F_n) \equiv \left| \int g(\bm{x}) \; d[F-F_n](\bm{x}) \right| \leq V_q(g) D_r(F, F_n), \; \;  1/q + 1/r = 1,
\label{eq:interr}
\end{equation}
\normalsize
where $V_q(g) = \|\partial^p g/\partial\bm{x}\|_{L_q}$, and $D_r(F, F_n)$ is the \textit{$L_r$-discrepancy}:
\begin{equation}
\small
D_r(F, F_n) =\left(\int \left|F_n(\bm{x}) - F(\bm{x})\right|^r\; d\bm{x} \right)^{1/r}.
\label{eq:disc}
\normalsize
\end{equation}
The discrepancy $D_r(F, F_n)$ measures how close the e.d.f. $F_n$ is to $F$, with a smaller value suggesting a better fit. Setting $r=\infty$, the $L_\infty$-discrepancy (or simply \textit{discrepancy}) becomes the classical K-S statistic for testing GOF. In other words, a point set with good fit to $F$ also provides reduced integration errors for a large class of integrands. A more general discussion of this connection in terms of kernel discrepancies can be found in \cite{Hic1999}.

For a general distribution $F$, the optimization of $D_r(F, F_n)$ can be a difficult problem. In the uniform setting $F = U[0,1]^p$, there has been some work on directly minimizing the discrepancy $D_\infty(F, F_n)$, including the cdf-rep-points in \citep{FW1993} and the uniform designs in \citep{Fan1980}. Such methods, however, are quite computationally expensive, and are applicable only for small point sets on $U[0,1]^p$ (see \citep{Fea2004a}). Because of this computational burden, modern QMC methods typically use {number-theoretic} techniques to generate point sets which achieve an \textit{asymptotically} quick decay rate for discrepancy. These include the {randomly-shifted lattice rules} \cite{Sea2002} using the component-by-component implementation of \cite{NC2006} (see also \cite{NK2014}), and the {randomly scrambled Sobol' sequences} due to \cite{Sob1967} and \cite{Owe1998}. While most QMC methods consider integration on the uniform hypercube $U[0,1]^p$, there are several ways to map point sets on $U[0,1]^p$ to non-uniform $F$. One such map is the inverse Rosenblatt transformation \citep{Ros1952}; however, it can be computed in closed-form only for a small class of distributions. When the density of $F$ is known up to a proportional constant, the Markov chain Quasi-Monte Carlo (MCQMC) approach \citep{OT2005} can also be used to generate QMC points on $F$.

\begin{figure}[t]
\centering
\includegraphics[width=0.9\linewidth]{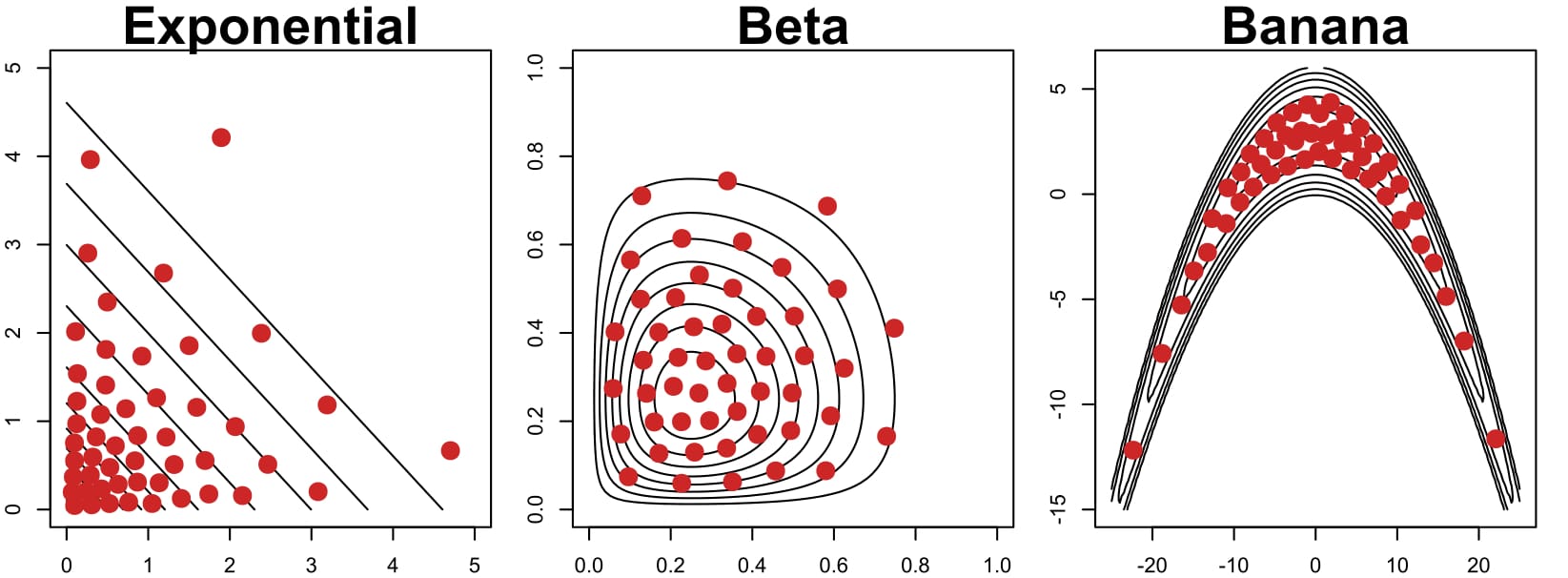}
\vspace{-0.3cm}
\caption{$n=50$ support points for 2-d i.i.d. $Exp(1)$, $Beta(2,4)$ and the banana-shaped distribution in \citep{GC2011}. Lines represent density contours.}
\label{fig:ex}
\end{figure}

Viewed in this light, the energy distance can be seen as a kernel discrepancy \citep{Hic1998} for non-uniform distributions, with the specific kernel choice being the negative Euclidean norm. However, in contrast with the typical number-theoretic construction of QMC point sets, support points are instead generated by optimizing the underlying d.c. formulation for the energy distance. This explicit optimization can have both advantages and disadvantages. On one hand, support points can be viewed as \textit{optimal} sampling points of $F$ (in the sense of minimum energy) for any desired sample size $n$. This optimality is evident in the three examples of support points plotted in Figure \ref{fig:ex} -- the points are concentrated in regions with high densities, but is sufficiently spread out to maximize the representativeness of each point. Such a ``space-filling'' property can allow for improved integration performance over existing QMC techniques, which we demonstrate in Section \ref{sec:sim}. On the other hand, the computational work for optimization can grow quickly when the desired sample size or dimension increases. To this end, we propose two algorithms which exploit the appealing d.c. formulation to efficiently generate point sets as large as 10,000 points in dimensions as large as 500.

This paper is organized as follows. Section \ref{sec:sp} proves several important theoretical properties of support points. Section \ref{sec:alg} proposes two algorithms for efficiently generating support points. Section \ref{sec:sim} outlines several simulations comparing the integration performance of support points with MC and an existing QMC method. Section \ref{sec:app} gives two important applications of support points in uncertainty propagation and Bayesian computation. Section \ref{sec:conc} concludes with directions for future research.

\section{Support points}
\label{sec:sp}

\subsection{Definition}
Let us first define the energy distance between two distributions $F$ and $G$:
\begin{definition}[Energy distance; Def. 1 of \citep{SZ2013}]
Let $F$ and $G$ be two distribution functions (d.f.s) on $\varnothing \neq \mathcal{X} \subseteq \mathbb{R}^p$ with finite means, and let $\bm{X}, \bm{X'} \distas{i.i.d.} G$ and $\bm{Y}, \bm{Y'} \distas{i.i.d.} F$. The \textup{energy distance} between $F$ and $G$ is defined as:
\begin{equation}
{E}(F,G) \equiv 2 \mathbb{E}\|\bm{X} - \bm{Y}\|_2 - \mathbb{E}\|\bm{X} - \bm{X'}\|_2 - \mathbb{E}\|\bm{Y} - \bm{Y'}\|_2.
\label{eq:mvl2}
\end{equation}
\end{definition}
\noindent When $G = F_n$ is the e.d.f. for $\{\bm{x}_i\}_{i=1}^n \subseteq \mathcal{X}$, this energy distance becomes:
\begin{equation}
{E}(F,F_n) = \frac{2}{n} \sum_{i=1}^n \mathbb{E}\|\bm{x}_i - \bm{Y}\|_2 - \frac{1}{n^2} \sum_{i=1}^n \sum_{j=1}^n \|\bm{x}_i - \bm{x}_j\|_2 - \mathbb{E}\|\bm{Y} - \bm{Y'}\|_2.
\label{eq:mvl2samp}
\end{equation}
\noindent For brevity, $F$ is assumed to be a continuous d.f. on $\varnothing \neq \mathcal{X} \subseteq \mathbb{R}^p$ with finite mean for the remainder of the paper.

The energy distance ${E}(F,F_n)$ was originally proposed in \citep{SR2004} as an efficient GOF test for high-dimensional data. In this light, support points are defined as the point set with best GOF under $E(F,F_n)$:

\begin{definition}[Support points]
Let $\bm{Y} \sim F$. For a fixed point set size $n \in \mathbb{N}$, the \textup{support points} of $F$ are defined as:
\begin{equation}
\small
\{\boldsymbol{\xi}_i\}_{i=1}^n \in \underset{\bm{x}_1, \cdots, \bm{x}_n}{\textup{Argmin}} \; {E}(F,F_n) = \underset{\bm{x}_1, \cdots, \bm{x}_n}{\textup{Argmin}} \left\{ \frac{2}{n} \sum_{i=1}^n \mathbb{E}\|\bm{x}_i - \bm{Y}\|_2 - \frac{1}{n^2} \sum_{i=1}^n \sum_{j=1}^n \|\bm{x}_i - \bm{x}_j\|_2 \right\}.
\tag{O}
\label{eq:spdef}
\end{equation}
\label{def:spdef}
\end{definition}
\vspace{-0.6cm}

The minimization of $E(F,F_n)$ is justified by the following {metric property}: 
\begin{theorem}[Energy distance, Prop. 2 of \citep{SZ2013}] ${E}(F,G) \geq 0$, with equality holding if and only if  $F {=} G$.
\end{theorem}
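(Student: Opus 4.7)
My plan is to prove the metric property by expressing the Euclidean norm via its Fourier representation, which converts $E(F,G)$ into a weighted $L_2$-distance between the characteristic functions of $F$ and $G$. This is the approach taken in the original goodness-of-fit paper of Székely and Rizzo, and it matches the duality alluded to in the introduction (``duality between powers of the Euclidean distance and its Fourier transform'').

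First, I would invoke the classical integral representation of the Euclidean norm: there exists a dimension-dependent constant $c_p > 0$ such that for all $u \in \mathbb{R}^p$,
\begin{equation*}
\|u\|_2 \;=\; c_p \int_{\mathbb{R}^p} \frac{1-\cos\langle t,u\rangle}{\|t\|_2^{p+1}}\, dt.
\end{equation*}
This is a standard Lévy-type identity, derivable by showing that the right-hand side is rotation invariant and positively homogeneous of degree one in $u$. I would apply this identity to $u = \bm{X}-\bm{Y}$, $u = \bm{X}-\bm{X}'$, and $u = \bm{Y}-\bm{Y}'$ respectively, then take expectations. Provided $F$ and $G$ have finite means (so that all three expected norms in the definition are finite), I can justify swapping expectation and the $t$-integral by Fubini's theorem.

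Next, writing $\phi_F(t) = \mathbb{E}\exp(i\langle t,\bm{Y}\rangle)$ and $\phi_G(t) = \mathbb{E}\exp(i\langle t,\bm{X}\rangle)$, I would use independence of $(\bm{X},\bm{Y})$, $(\bm{X},\bm{X}')$, $(\bm{Y},\bm{Y}')$ to compute
\begin{align*}
\mathbb{E}\cos\langle t, \bm{X}-\bm{Y}\rangle &= \operatorname{Re}\bigl(\phi_G(t)\overline{\phi_F(t)}\bigr),\\
\mathbb{E}\cos\langle t, \bm{X}-\bm{X}'\rangle &= |\phi_G(t)|^2,\\
\mathbb{E}\cos\langle t, \bm{Y}-\bm{Y}'\rangle &= |\phi_F(t)|^2.
\end{align*}
Substituting into (\ref{eq:mvl2}), the constant ``$1$'' terms cancel (because $2-1-1=0$), and algebraic manipulation yields the identity
\begin{equation*}
E(F,G) \;=\; c_p \int_{\mathbb{R}^p} \frac{|\phi_F(t)-\phi_G(t)|^2}{\|t\|_2^{p+1}}\, dt.
\end{equation*}

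From this representation, both conclusions are immediate. Nonnegativity follows because the integrand is a nonnegative function against a positive measure. For the equality case, $E(F,G)=0$ forces $|\phi_F(t)-\phi_G(t)|^2 = 0$ for Lebesgue-a.e.\ $t$, hence everywhere by the continuity of characteristic functions, and the uniqueness theorem for characteristic functions then gives $F = G$. The converse ($F=G \Rightarrow E(F,G)=0$) is trivial by direct substitution. The main technical hurdle is establishing the Lévy integral representation of $\|u\|_2$ with the correct positive constant $c_p$ and verifying the Fubini exchange; I would either cite this (it is Lemma 1 of Székely and Rizzo, 2005) or sketch it by splitting the $t$-integral near the origin and at infinity and using the assumed finite first moments to control integrability.
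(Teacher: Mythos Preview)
Your argument is correct and is precisely the standard Sz\'ekely--Rizzo proof via the Fourier representation of the Euclidean norm. Note, however, that the paper does not give its own proof of this statement: the theorem is stated with attribution (``Prop.\ 2 of \citep{SZ2013}'') and no proof environment follows. The result is simply imported from the literature, and the characteristic-function identity you derive is exactly the ``Prop.\ 1 of \cite{SZ2013}'' duality that the paper later invokes in the proof of Theorem~\ref{thm:consis} (see equation \eqref{eq:enaschar}). So your proposal is not merely consistent with the paper's approach---it \emph{is} the approach the paper relies on by citation, made explicit.
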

\noindent This theorem shows that the energy between two distributions is always non-negative, and equals zero if and only if these distributions are the same. In this sense, ${E}(F,G)$ can be viewed as a metric on the space of distribution functions. Support points, being the point set which minimizes such a metric, can then be interpreted as optimal sampling points which best represent $F$.

The choice of the energy distance $E(F,F_n)$ as an optimization objective is similar to its appeal in GOF testing. As mentioned in the Introduction, $E(F,F_n)$ was originally proposed as an efficient alternative to classical K-S statistic. However, not only is ${E}(F,F_n)$ {easy-to-evaluate}, it also has a desirable formulation as a d.c. program. We present in Section \ref{sec:alg} two algorithms which exploits this structure to efficiently generate support points.

In the univariate setting of $p = 1$, an interesting equivalence can be established between support points and optimal $L_2$-discrepancy points:
\begin{proposition}[Optimal $L_2$-discrepancy]
For a univariate d.f. $F$, the support points of $F$ are equal to the point set with minimal $L_2$-discrepancy.
\label{prop:l2disc}
\end{proposition}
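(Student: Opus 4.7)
The plan is to reduce the proposition to a classical identity in the univariate setting: for any two distribution functions $F, G$ on $\mathbb{R}$ with finite first moments,
\begin{equation*}
E(F, G) = 2\int_{-\infty}^{\infty} [F(t) - G(t)]^2 \, dt = 2\, D_2(F, G)^2.
\end{equation*}
Once this identity is established, the result is immediate: setting $G = F_n$, we see that $E(F, F_n)$ and $D_2(F, F_n)$ are monotone transforms of one another (both nonnegative), so any point set minimizing one minimizes the other.

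To derive the identity, I would start from the elementary formula $|a - b| = \int_{-\infty}^{\infty} \mathbf{1}\{\min(a,b) \leq t < \max(a,b)\}\, dt$. Taking $X \sim G$ and $Y \sim F$ independent, and applying Fubini's theorem (valid since $\mathbb{E}|X|, \mathbb{E}|Y| < \infty$), I get
\begin{equation*}
\mathbb{E}|X - Y| = \int_{-\infty}^{\infty} \bigl[ G(t)(1-F(t)) + F(t)(1-G(t)) \bigr] \, dt.
\end{equation*}
The same calculation applied to iid copies $X, X' \sim G$ and $Y, Y' \sim F$ yields $\mathbb{E}|X - X'| = 2\int G(t)(1-G(t))\, dt$ and $\mathbb{E}|Y - Y'| = 2\int F(t)(1-F(t))\, dt$. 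Substituting these three expressions into the definition of the energy distance in \eqref{eq:mvl2} and collecting terms, the pointwise integrand simplifies algebraically to $2[F(t) - G(t)]^2$, yielding the claimed identity.

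Finally, specializing to $G = F_n$, the minimizers over $\{\bm{x}_1,\ldots,\bm{x}_n\}$ of $E(F, F_n)$ and of $D_2(F, F_n)$ coincide, which is precisely the claim of the proposition. The main (mild) obstacle is justifying the interchange of expectation and integration and carefully bookkeeping the cancellations in the algebraic simplification; everything else is routine once the representation of $|a-b|$ as an indicator integral is in hand. Note that this proof depends essentially on $p = 1$: the indicator representation for $|a - b|$ has no direct analogue for the Euclidean norm in higher dimensions, which is why the equivalence with $L_2$-discrepancy is a univariate phenomenon.
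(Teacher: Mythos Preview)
Your proposal is correct and follows essentially the same route as the paper: both arguments rest on the univariate identity $E(F,F_n) = 2\,D_2^2(F,F_n)$, from which the equivalence of minimizers is immediate. The only difference is that the paper simply cites this identity (attributing it to Sz\'ekely 2003), whereas you supply a direct derivation via the indicator representation of $|a-b|$; your derivation is sound and the algebraic cancellation works out exactly as you describe.
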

\begin{proof}
It can be shown \citep{Sze2003} that ${E}(F,F_n) = 2 D_2^2(F,F_n)$, where $F_n$ is the e.d.f. of $\{{x}_i\}_{i=1}^n \subseteq \mathcal{X} \subseteq \mathbb{R}$ and $D_2(F, F_n)$ is the one-dimensional $L_2$-discrepancy in \eqref{eq:disc}. This proves the assertion.
\end{proof}

\noindent Unfortunately, such an equivalence fails to hold for $p>1$, since the $L_2$-discrepancy is not rotation-invariant. Support points and optimal $L_2$-discrepancy points can therefore behave quite differently in the multivariate setting.

\subsection{Theoretical properties}
\label{sec:theory}
While the notion of reverse engineering the energy distance is intuitively appealing, some theory is needed to demonstrate why the resulting points are appropriate for (a) representing the desired distribution $F$, and (b) integrating under $F$. To this end, we provide three theorems: the first proves the distributional convergence of support points to $F$, the second establishes a Koksma-Hlawka-like bound connecting integration error with ${E}(F,F_n)$, and the last provides an existence result for the resulting error convergence rate. The proofs of these results rely on the important property that, for generalized functions, the Fourier transform of the Euclidean norm $\|\cdot\|_2$ is proportional to the same norm raised to some power (see pg. 173-174 in \citep{GS1964}). We refer to various forms of this \textit{duality} property throughout the proofs.

\subsubsection{Convergence in distribution}
We first address the distributional convergence of support points to the desired distribution $F$:
\begin{theorem}[Distributional convergence]
Let $\bm{X} \sim F$ and $\bm{X}_n \sim F_n$, where $F_n$ is the e.d.f. of the support points in \eqref{eq:spdef}. Then $\bm{X}_n \xrightarrow{d} \bm{X}$.
\label{thm:consis}
\end{theorem}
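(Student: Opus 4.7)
The plan is to exploit the Fourier duality noted in the excerpt to recast the energy distance as a weighted $L^2$ distance between characteristic functions, and then show that this distance vanishes for support points, from which weak convergence follows.

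\textbf{Step 1 (Optimal energy vanishes).} First I would show $E(F,F_n^*) \to 0$, where $F_n^*$ is the e.d.f. of the support points $\{\boldsymbol{\xi}_i\}_{i=1}^n$. Let $\bm{Y}_1,\ldots,\bm{Y}_n \distas{i.i.d.} F$ with e.d.f. $\tilde{F}_n$. A direct calculation in \eqref{eq:mvl2samp}, using that $\|\bm{Y}_i-\bm{Y}_i\|=0$ and $\mathbb{E}\|\bm{Y}_i-\bm{Y}_j\|=\mathbb{E}\|\bm{Y}-\bm{Y}'\|$ for $i\ne j$, gives
$$\mathbb{E}[E(F,\tilde{F}_n)] = \frac{1}{n}\,\mathbb{E}\|\bm{Y}-\bm{Y}'\|_2,$$
which is finite since $F$ has finite mean. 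Hence some realization $\tilde{F}_n$ achieves $E(F,\tilde{F}_n)\le \mathbb{E}\|\bm{Y}-\bm{Y}'\|_2/n$, and since support points minimize $E(F,\cdot)$ over all $n$-point e.d.f.s, $0 \le E(F,F_n^*) \le \mathbb{E}\|\bm{Y}-\bm{Y}'\|_2/n \to 0$.

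\textbf{Step 2 (Fourier representation).} Next I would invoke the Fourier duality identity (a consequence of the fact that the Fourier transform of $\|\bm{x}\|_2$, viewed as a generalized function, is proportional to $\|\bm{t}\|_2^{-(p+1)}$; see pp.~173--174 of \citep{GS1964}), which expresses the energy distance as
$$E(F,G) = \frac{1}{c_p}\int_{\mathbb{R}^p}\frac{|\phi_F(\bm{t})-\phi_G(\bm{t})|^2}{\|\bm{t}\|_2^{p+1}}\,d\bm{t},$$
for some constant $c_p>0$ depending only on $p$, where $\phi_F$ and $\phi_G$ denote the characteristic functions of $F$ and $G$. Combined with Step 1, this shows that $\phi_{F_n^*}$ converges to $\phi_F$ in $L^2$ with respect to the measure $\|\bm{t}\|_2^{-(p+1)}\,d\bm{t}$.

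\textbf{Step 3 (Weak convergence via L\'evy).} By the subsequence principle for weak convergence, it suffices to show every subsequence of $\{F_n^*\}$ admits a further subsequence converging weakly to $F$. Given such a subsequence, $L^2$ convergence allows me to pass to a further subsequence along which $\phi_{F_n^*}(\bm{t})\to \phi_F(\bm{t})$ for almost every $\bm{t}$. Since each $|\phi_{F_n^*}|\le 1$ and $\phi_F$ is continuous, I would then argue that vanishing of $E(F,F_n^*)$ forces tightness of $\{F_n^*\}$ (otherwise the term $n^{-2}\sum_{i,j}\|\bm{\xi}_i-\bm{\xi}_j\|$ would inflate $E(F,F_n^*)$ beyond the bound in Step 1), which in turn yields equicontinuity of the characteristic functions and thereby upgrades a.e.\ convergence to pointwise convergence on all of $\mathbb{R}^p$. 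L\'evy's continuity theorem then gives $F_n^* \Rightarrow F$, i.e., $\bm{X}_n \xrightarrow{d}\bm{X}$.

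The main obstacle is Step 3: the Fourier identity delivers only weighted $L^2$ convergence of characteristic functions, and bridging this to the pointwise convergence required by L\'evy's theorem requires both a tightness argument for $\{F_n^*\}$ derived directly from the vanishing energy distance and a careful handling of the singular weight $\|\bm{t}\|_2^{-(p+1)}$ near the origin, where $\phi_F$ and $\phi_{F_n^*}$ are automatically close since both equal one at $\bm{t}=\bm{0}$.
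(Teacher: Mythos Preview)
Your overall strategy matches the paper's: bound $E(F,F_n^*)$ above by $\mathbb{E}[E(F,\tilde F_n)]$ for an i.i.d.\ sample, invoke the Fourier identity to convert vanishing energy into weighted $L^2$ convergence of characteristic functions, pass to an a.e.\ convergent subsequence, and finish with a L\'evy-type argument. Your Step~1 is in fact cleaner than the paper's: you compute $\mathbb{E}[E(F,\tilde F_n)]=n^{-1}\mathbb{E}\|\bm Y-\bm Y'\|_2$ directly, whereas the paper takes a longer route through Glivenko--Cantelli, the Portmanteau theorem, and monotone convergence to reach the same conclusion that $\mathbb{E}[E(F,\tilde F_n)]\to 0$.

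The divergence is in Step~3. The paper does \emph{not} try to extract tightness from the energy distance. Instead it proves a short lemma (their Lemma~1) showing that if $\phi_n\to\phi$ almost everywhere and $\phi$ is a genuine characteristic function, then $F_n\Rightarrow F$; the tightness is obtained inside that lemma from the standard bound $F_n([-M,M]^c)\le \alpha M\int_0^{1/M}(1-\mathrm{Re}\,\phi_n)\,dt$ together with continuity of $\phi$ at the origin and dominated convergence on the a.e.\ set. This bypasses your proposed detour entirely. Your tightness heuristic also has the sign backwards: the term $-n^{-2}\sum_{i,j}\|\boldsymbol\xi_i-\boldsymbol\xi_j\|_2$ is \emph{subtracted}, so its growth would push $E$ down, not up; if one insists on arguing directly from the energy, it is the first term $2n^{-1}\sum_i\mathbb{E}\|\boldsymbol\xi_i-\bm Y\|_2$ that penalizes escaping mass, and the cancellation with the second term must be tracked carefully. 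The paper's route via Lemma~1 is both simpler and avoids this bookkeeping.
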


\noindent This relies on the following lemma, which slightly extends the L\'evy continuity theorem to the {almost-everywhere (a.e.) pointwise convergence} setting.

\begin{lemma}
Let $(F_n)_{n=1}^\infty$ be a sequence of d.f.s with characteristic functions (c.f.s) $(\phi_n(\bm{t}))_{n=1}^\infty$, and let $F$ be a d.f. with c.f. $\phi(\bm{t})$. If $\bm{X}_n \sim F_n$ and $\bm{X} \sim F$, with $\lim_{n \rightarrow \infty} \phi_n(\bm{t}) = \phi(\bm{t})$ a.e. (in the Lebesgue sense), then $\bm{X}_n \xrightarrow{d} \bm{X}$.
\label{lem:dist}
\end{lemma}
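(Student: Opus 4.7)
The plan is to apply the classical Lévy continuity theorem through the subsequence principle: if every subsequence of $(F_n)$ has a further subsequence converging weakly to $F$, then $F_n \to F$ weakly, equivalently $\bm{X}_n \xrightarrow{d} \bm{X}$. The mechanism is Prokhorov's theorem, so the main task is to establish tightness of $(F_n)$ from the a.e.\ (rather than everywhere) convergence of the characteristic functions. Once tightness is in hand, any weakly convergent subsequence $F_{n_k} \to G$ has c.f.\ $\psi$ satisfying $\phi_{n_k}(\bm{t}) \to \psi(\bm{t})$ pointwise (applying weak convergence to the bounded continuous function $\bm{x} \mapsto e^{i\bm{t}^T\bm{x}}$). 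Combined with the hypothesis $\phi_n \to \phi$ a.e., this yields $\psi = \phi$ a.e.; since both are continuous, $\psi \equiv \phi$, and uniqueness of characteristic functions forces $G = F$.

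The main obstacle is tightness. A direct one-dimensional argument is unavailable because it would rely on the restriction of $\phi_n$ to the coordinate axes, which form a Lebesgue-null set in $\mathbb{R}^p$ and need not inherit pointwise convergence. I would instead use a multivariate bound. By Fubini's theorem and the identity $\int_{-\delta}^{\delta} e^{itx}\, dt = 2\sin(\delta x)/x$,
\[
\frac{1}{(2\delta)^p}\int_{[-\delta,\delta]^p}\bigl(1-\mathrm{Re}\,\phi_n(\bm{t})\bigr)\, d\bm{t} \;=\; \int\Bigl[\,1-\prod_{k=1}^{p}\mathrm{sinc}(\delta x_k)\,\Bigr]\, dF_n(\bm{x}),
\]
where $\mathrm{sinc}(u) = \sin(u)/u$. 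When $\|\bm{x}\|_\infty \ge 2/\delta$, at least one factor satisfies $|\mathrm{sinc}(\delta x_k)| \le 1/2$ while the others are bounded by $1$, so $\prod_k \mathrm{sinc}(\delta x_k) \le 1/2$ and the bracketed integrand is at least $1/2$. This yields the tail bound
\[
F_n\bigl(\|\bm{X}\|_\infty \ge 2/\delta\bigr) \;\le\; \frac{2}{(2\delta)^p}\int_{[-\delta,\delta]^p}\bigl(1-\mathrm{Re}\,\phi_n(\bm{t})\bigr)\, d\bm{t}.
\]

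To promote this to a uniform tail estimate, the dominated convergence theorem applies on the right-hand side: the integrands are bounded by $2$ on the compact box $[-\delta,\delta]^p$ and converge a.e.\ to $1-\mathrm{Re}\,\phi$, so the right-hand side tends to $(2\delta)^{-p}\int_{[-\delta,\delta]^p}2\bigl(1-\mathrm{Re}\,\phi(\bm{t})\bigr)\, d\bm{t}$ as $n \to \infty$. Because $\phi$ is continuous at the origin with $\phi(\bm{0})=1$, this limit can be made arbitrarily small by taking $\delta$ small; enlarging the compact set to absorb the finitely many remaining indices yields tightness of $(F_n)$. Combined with the subsequence argument above, this establishes $\bm{X}_n \xrightarrow{d} \bm{X}$.
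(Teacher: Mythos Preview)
Your proof is correct and follows the same overall strategy as the paper: establish tightness of $(F_n)$ via an integral bound involving $1-\mathrm{Re}\,\phi_n$ over a small neighborhood of the origin, pass to the limit using dominated convergence and the continuity of $\phi$ at $\bm{0}$, and then run a subsequence argument to identify the limit. The paper presents the argument in the univariate case, citing Lemma~9.6.3 of Resnick for the tail bound and Theorem~9.5.2 of Resnick for the subsequence step, and asserts without detail that the multivariate case is analogous. Your version is more self-contained: you derive an explicit $p$-dimensional tail bound via the product-of-sincs identity, and you correctly observe that one cannot simply reduce to one-dimensional marginals because the coordinate axes form a Lebesgue-null set in $\mathbb{R}^p$ on which a.e.\ convergence of $\phi_n$ need not hold. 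This makes your treatment of the genuinely multivariate step sharper than the paper's, though the underlying idea is the same.
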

\begin{proof}(Lemma \ref{lem:dist})
See Appendix A.1 of the supplemental article \citep{MJ2017supp}.
\end{proof}

\begin{proof}(Theorem \ref{thm:consis}) Define the sequence of random variables $(\bm{Y}_i)_{i=1}^\infty \distas{i.i.d.} F$, and let $\tilde{F}_n$ denote the e.d.f. of $\{\bm{Y}_i\}_{i=1}^n$. By the Glivenko-Cantelli lemma, $\lim_{n \rightarrow \infty} \sup_{\bm{x} \in \mathbb{R}^p} |\tilde{F}_n(\bm{x}) - F(\bm{x})| = 0$ a.s., so $\tilde{F}_n(\bm{x}) \rightarrow F(\bm{x})$ a.s. for all $\bm{x}$. Let $\phi(\bm{t})$ and $\tilde{\phi}_n(\bm{t})$ denote the c.f.s of $F$ and $\tilde{F}_n$, respectively. Since $|\exp(\text{i}\langle \bm{t}, \bm{x} \rangle)| \leq 1$, applying the Portmanteau theorem (Theorem 8.4.1 in \cite{Res1999}) and the dominated convergence theorem gives:
\begin{equation}
\lim_{n \rightarrow \infty} \mathbb{E}[ |\phi(\bm{t}) - \tilde{\phi}_n(\bm{t})|^2 ] = 0.
\label{eq:limexp}
\end{equation}

Using Prop. 1 of \cite{SZ2013} (this is a \textit{duality} result connecting the energy distance with c.f.s), the expected energy between $\tilde{F}_n$ and $F$ becomes:
\small
\begin{equation}
\mathbb{E} [{E}(F, \tilde{F}_n)] = \frac{1}{a_p} \mathbb{E} \left[ \int \frac{|\phi(\bm{t}) - \tilde{\phi}_n(\bm{t})|^2}{\|\bm{t}\|_2^{p+1}} \; d\bm{t} \right]  = \frac{1}{a_p} \int \frac{\mathbb{E} \left[|\phi(\bm{t}) - \tilde{\phi}_n(\bm{t})|^2 \right]}{\|\bm{t}\|_2^{p+1}} \; d\bm{t},
\label{eq:enaschar}
\end{equation}
\normalsize
where $a_p$ is some constant depending on $p$, with the last step following from Fubini's theorem. Note that $\mathbb{E} \left[|\phi(\bm{t}) - \tilde{\phi}_n(\bm{t})|^2 \right] = \frac{1}{n}\text{Var} \left[ \exp(\text{i}\langle \bm{t}, \bm{Y}_1 \rangle ) \right]$, so $\mathbb{E} \left[|\phi(\bm{t}) - \tilde{\phi}_n(\bm{t})|^2 \right]$ is monotonically decreasing in $n$. By the monotone convergence theorem and \eqref{eq:limexp}, we have:
\begin{equation}
\lim_{n \rightarrow \infty} \mathbb{E} [{E}(F, \tilde{F}_n)] = \frac{1}{a_p} \int \lim_{n \rightarrow \infty} \frac{ \mathbb{E} [|\phi(\bm{t}) - \tilde{\phi}_n(\bm{t})|^2]}{\|\bm{t}\|_2^{p+1}} \; d\bm{t} = 0.
\label{eq:pfconv}
\end{equation}

Consider now the e.d.f.s $(F_n)_{n=1}^\infty$ and c.f.s $(\phi_n)_{n=1}^\infty$ for support points. By Definition \ref{def:spdef}, ${E}(F, {F}_n) \leq \mathbb{E} [{E}(F, \tilde{F}_n) ]$, so $\lim_{n \rightarrow \infty} {E}(F, {F}_n) = 0$ by \eqref{eq:pfconv} and the squeeze theorem. Take any subsequence $(n_k)_{k=1}^\infty \subseteq \mathbb{N}_+$, and note that:
\[\lim_{k \rightarrow \infty} {E}(F, {F}_{n_k}) = \lim_{k \rightarrow \infty} \int \frac{|\phi(\bm{t}) - \phi_{n_k}(\bm{t})|^2}{\|\bm{t}\|_2^{p+1}} \; d\bm{t} = 0.\]
We know by the Riesz-Fischer Theorem (pg. 148 in \citep{RF2010}) that a sequence of functions $(f_n)$ which converge to $f$ in $L_2$ has a subsequence which converges pointwise a.e. to $f$. Applied here, this suggests the existence of a further subsequence $(n_k')_{k=1}^\infty \subseteq (n_k)_{k=1}^\infty$ satisfying ${\phi}_{n_k'}(\bm{t}) \stackrel{k \rightarrow \infty}{\rightarrow} \phi(\bm{t})$ a.e., so by Lemma \ref{lem:dist}, $\bm{X}_{n_k'} \xrightarrow{d} \bm{X}$. Since $(n_k)_{k=1}^\infty$ was arbitrarily chosen, it follows by the proof of Corollary 1 in Chapter 9 of \citep{Sho2000} that $\bm{X}_{n} \xrightarrow{d} \bm{X}$, which is as desired.
\end{proof}
In words, this theorem shows that support points are indeed representative of the desired distribution $F$ when the number of points $n$ grows large. From this, the \textit{consistency} of support points can be established:
\begin{corollary}[Consistency]
Let $\bm{X} \sim F$ and $\bm{X}_n \sim F_n$, with $F_n$ as in Theorem \ref{thm:consis}. (a) If $g: \mathcal{X} \rightarrow \mathbb{R}$ is continuous, then $g(\bm{X}_n) \xrightarrow{d} g(\bm{X})$. (b) If $g$ is continuous and bounded, then $\underset{n \rightarrow \infty}{\lim} \mathbb{E}[g(\bm{X}_n)] = \underset{n \rightarrow \infty}{\lim} \frac{1}{n} \sum_{i=1}^n g(\boldsymbol{\xi}_i) = \mathbb{E}[g(\bm{X})]$.
\label{corr:consis}
\end{corollary}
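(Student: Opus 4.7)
The plan is to derive both parts of the corollary as essentially immediate consequences of Theorem \ref{thm:consis} (distributional convergence of support points to $F$) combined with two standard tools from weak-convergence theory: the continuous mapping theorem and the Portmanteau theorem. No serious analytic work is required beyond the preceding theorem.

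For part (a), I would start from the conclusion $\bm{X}_n \xrightarrow{d} \bm{X}$ established in Theorem \ref{thm:consis} and invoke the continuous mapping theorem (see, e.g., Theorem 3.2.4 in Resnick's text): if $h$ is continuous on a set that carries the law of $\bm{X}$, then $h(\bm{X}_n) \xrightarrow{d} h(\bm{X})$. Here $g$ is continuous on all of $\mathcal{X}$, so the hypothesis is trivially satisfied and the conclusion $g(\bm{X}_n) \xrightarrow{d} g(\bm{X})$ follows.

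For part (b), the key observation is that since $F_n$ is the empirical distribution of the support points $\{\boldsymbol{\xi}_i\}_{i=1}^n$, by construction
\begin{equation*}
\mathbb{E}[g(\bm{X}_n)] = \int g(\bm{x}) \, dF_n(\bm{x}) = \frac{1}{n}\sum_{i=1}^n g(\boldsymbol{\xi}_i),
\end{equation*}
so the two expressions in part (b) are equal for every fixed $n$. It then suffices to show that $\mathbb{E}[g(\bm{X}_n)] \to \mathbb{E}[g(\bm{X})]$. Since $\bm{X}_n \xrightarrow{d} \bm{X}$ from Theorem \ref{thm:consis} and $g$ is continuous and bounded, the Portmanteau theorem (used in the opposite direction from its use in Theorem \ref{thm:consis}) immediately yields $\mathbb{E}[g(\bm{X}_n)] \to \mathbb{E}[g(\bm{X})]$, finishing the proof.

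There is no real obstacle here; the result is a textbook corollary of weak convergence. The only subtle point to flag is that in part (b) the finiteness of $\mathbb{E}[g(\bm{X})]$ and $\mathbb{E}[g(\bm{X}_n)]$ is automatic because $g$ is bounded, so no additional uniform integrability argument is needed. The statement in (a) could in fact be strengthened to drop the requirement that $\mathcal{X} = \mathbb{R}^p$, but since support points lie in $\mathcal{X}$ by construction and $g$ is continuous on $\mathcal{X}$, the continuous mapping theorem applies without modification.
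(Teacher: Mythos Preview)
Your proposal is correct and matches the paper's own proof essentially verbatim: part (a) is deduced from Theorem \ref{thm:consis} via the continuous mapping theorem, and part (b) from the Portmanteau theorem. The additional remarks you make (the identification of $\mathbb{E}[g(\bm{X}_n)]$ with the sample average, and the automatic finiteness from boundedness) are helpful clarifications but do not depart from the paper's argument.
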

\begin{proof}
Part (a) follows from the continuous mapping theorem and Theorem \ref{thm:consis}. Part (b) follows by the Portmanteau theorem.
\end{proof}
\noindent The purpose of this corollary is two-fold: it demonstrates the {consistency} of support points for integration, and justifies the use of these point sets for a variety of other applications. Specifically, part (a) shows that support points are appropriate for performing uncertainty propagation in stochastic simulations, an application further explored in Section \ref{sec:integ}. Part (b) shows that any continuous and bounded integrand $g$ can be {consistently estimated} using support points, i.e., its sample average converges to the desired integral.

\subsubsection{A Koksma-Hlawka-like bound}
{Next, we present a theorem which upper bounds the squared integration error $I^2(g;F,F_n)$ by a term proportional to ${E}(F,F_n)$ for a large class of integrands. Such a result provides some justification on why the energy distance may be a \textit{good} criterion for integration. Here, we first provide a brief review of conditionally positive definite (c.p.d.) kernels, its native spaces, and their corresponding reproducing kernels, three ingredients which will be used for proving the desired theorem.\\

Consider the following definition of a \textit{conditionally positive definite kernel}:
\begin{definition}[c.p.d. kernel; Def. 8.1 of \citep{Wen2005}] A continuous function $\Phi: \mathbb{R}^p \rightarrow \mathbb{R}$ is a \textup{c.p.d. kernel} of order $m$ if, for all pairwise distinct $\bm{x}_1, \cdots, \bm{x}_N \in \mathbb{R}^p$ and all $\boldsymbol{\zeta} \in \mathbb{R}^N \setminus \{0\}$ satisfying $\sum_{j=1}^N \zeta_j p(\bm{x}_j) = 0$ for all polynomials of degree less than $m$, the quadratic form $\sum_{j=1}^N \sum_{k=1}^N \zeta_j \zeta_k \Phi(\bm{x}_j- \bm{x}_k)$ is positive.
\label{def:cpd}
\end{definition}

Similar to the theory of positive definite kernels (see, e.g., Section 10.1 and 10.2 of \cite{Wen2005}), one can use a c.p.d. kernel $\Phi$ to construct a reproducing kernel Hilbert space (RKHS) along with its reproducing kernel. This is achieved using the so-called \textit{native space} of $\Phi$:
\begin{definition}[Native space; Def. 10.16 of \citep{Wen2005}] Let $\Phi : \mathbb{R}^p \rightarrow \mathbb{R}$ be a c.p.d. kernel of order $m \geq 1$, and let $\mathcal{P}=\pi_{m-1}(\mathbb{R}^p)$ be the space of polynomials with degree less than $m$. Define the linear space:
\[\mathcal{F}_{\Phi}(\mathbb{R}^p) = \left\{ f(\cdot) = \sum_{j=1}^N \zeta_j \Phi(\bm{x}_j - \cdot) \; : \begin{array}{l}
\vspace{0.2cm}
N \in \mathbb{N}; \; \boldsymbol{\zeta} \in \mathbb{R}^N; \; \bm{x}_1, \cdots, \bm{x}_N \in \mathbb{R}^p,\\
\vspace{0.2cm}
\sum_{j=1}^N \zeta_j p(\bm{x}_j) = 0 \text{ for all } p \in \mathcal{P}
\end{array} \right\},\]
endowed with the inner product:
\[ \Bigg\langle \sum_{j=1}^N \zeta_j\Phi(\bm{x}_j- \cdot), \sum_{k=1}^M \zeta'_k \Phi(\bm{y}_k- \cdot) \Bigg\rangle_{\Phi}= \sum_{j=1}^N \sum_{k=1}^M \zeta_j \zeta'_k\Phi(\bm{x}_j - \bm{y}_k).\]
Let $\{\boldsymbol{\psi}_1, \cdots, \boldsymbol{\psi}_m\} \subseteq \mathbb{R}^p, m = \textup{dim}(\mathcal{P})$ be a $\mathcal{P}$-unisolvent subset\footnote{See Definition 2.6 of \cite{Wen2005}.}, and let $\{p_1, \cdots, p_m\} \subseteq \mathcal{P}$ be a Lagrange basis of $\mathcal{P}$ for such a subset. Furthermore, define the projective map $\Pi_\mathcal{P}:  C(\mathbb{R}^p)\footnote{$C(\mathbb{R}^p)$ is the space of continuous functions on $\mathbb{R}^p$.} \rightarrow \mathcal{P}$ as $\Pi_{\mathcal{P}}(f) = \sum_{k=1}^m f(\boldsymbol{\psi}_k) p_k$, and the map $\mathcal{R}: \mathcal{F}_{\Phi}(\mathbb{R}^p) \rightarrow C(\mathbb{R}^p)$ as $\mathcal{R}f(\bm{x}) = f(\bm{x}) - \Pi_{\mathcal{P}}f(\bm{x})$. The \textup{native space} for $\Phi$ is then defined as:
\[\mathcal{N}_{\Phi}(\mathbb{R}^p) = \mathcal{R}(\mathcal{F}_\Phi(\mathbb{R}^p)) + \mathcal{P},\]
and is equipped with the semi-inner product:
\[ \langle f, g \rangle_{\mathcal{N}_\Phi(\mathbb{R}^p)} = \langle \mathcal{R}^{-1}(f - \Pi_{\mathcal{P}} f), \mathcal{R}^{-1}(g - \Pi_{\mathcal{P}} g) \rangle_{\Phi}.\]
\label{def:native}
\end{definition}

\vspace{-0.6cm}

After obtaining the native space $\mathcal{N}_\Phi(\mathbb{R}^p)$, one can then define an appropriate inner product on $\mathcal{N}_\Phi(\mathbb{R}^p)$ to transform it into a RKHS:
\begin{theorem}[Native space to RKHS; Thm. 10.20 of \cite{Wen2005}] The native space $\mathcal{N}_\Phi(\mathbb{R}^p)$ for a c.p.d. kernel $\Phi$ carries the inner product $\langle f, g \rangle = \langle f, g\rangle_{\mathcal{N}_\Phi(\mathbb{R}^p)} + \sum_{k=1}^m f(\boldsymbol{\psi}_k) g(\boldsymbol{\psi}_k)$. With this inner product, $\mathcal{N}_\Phi(\mathbb{R}^p)$ becomes a reproducing kernel Hilbert space with reproducing kernel:
\begin{align*}
k(\bm{x},\bm{y}) &= \Phi(\bm{x}-\bm{y}) - \sum_{k=1}^m p_k(\bm{x})\Phi(\boldsymbol{\psi}_k-\bm{y}) - \sum_{l=1}^m p_l(\bm{y}) \Phi(\bm{x}-\boldsymbol{\psi}_l)\\
& \quad \quad + \sum_{k=1}^m\sum_{l=1}^m p_k(\bm{x}) p_l(\bm{y}) \Phi(\boldsymbol{\psi}_k-\boldsymbol{\psi}_l) + \sum_{k=1}^m p_k(\bm{x}) p_k(\bm{y}).
\end{align*}
\label{thm:rkhs}
\end{theorem}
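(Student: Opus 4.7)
The plan is to establish four items: (i) $\langle \cdot, \cdot\rangle$ is a genuine inner product on $\mathcal{N}_\Phi(\mathbb{R}^p)$, (ii) $\mathcal{N}_\Phi(\mathbb{R}^p)$ is complete under the induced norm, (iii) the proposed $k(\cdot, \bm{y})$ lies in $\mathcal{N}_\Phi(\mathbb{R}^p)$ for every $\bm{y}$, and (iv) the reproducing identity $\langle f, k(\cdot, \bm{y})\rangle = f(\bm{y})$ holds for all $f \in \mathcal{N}_\Phi(\mathbb{R}^p)$. Items (iii) and (iv) pin down the algebraic form of $k$ and drive the substance of the argument; (i) and (ii) are handled by the standard machinery of c.p.d. kernels.

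For (iii), I would introduce the auxiliary function $f_{\bm{y}}(\cdot) := \Phi(\bm{y} - \cdot) - \sum_{k=1}^m p_k(\bm{y})\,\Phi(\boldsymbol{\psi}_k - \cdot)$. Because $\{p_k\}$ is the Lagrange basis at the unisolvent nodes $\{\boldsymbol{\psi}_k\}$, the coefficient vector $(1, -p_1(\bm{y}), \ldots, -p_m(\bm{y}))$ attached to the source points $(\bm{y}, \boldsymbol{\psi}_1, \ldots, \boldsymbol{\psi}_m)$ annihilates every polynomial in $\mathcal{P}$, so $f_{\bm{y}} \in \mathcal{F}_\Phi(\mathbb{R}^p)$. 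A direct expansion of $\mathcal{R} f_{\bm{y}} = f_{\bm{y}} - \Pi_\mathcal{P} f_{\bm{y}}$, using radial symmetry $\Phi(\bm{z}) = \Phi(-\bm{z})$ and a harmless relabeling of summation indices, reproduces exactly the four $\Phi$-valued lines of the claimed $k(\bm{x}, \bm{y})$. The remaining piece $\sum_k p_k(\bm{x})\, p_k(\bm{y})$ is a polynomial in $\bm{x}$, so overall $k(\cdot, \bm{y}) = \mathcal{R} f_{\bm{y}} + \sum_k p_k(\cdot)\, p_k(\bm{y}) \in \mathcal{R}(\mathcal{F}_\Phi(\mathbb{R}^p)) + \mathcal{P} = \mathcal{N}_\Phi(\mathbb{R}^p)$.

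For (iv), I would split $\langle f, k(\cdot, \bm{y})\rangle$ into the semi-inner product piece and the evaluation piece at $\{\boldsymbol{\psi}_l\}$. Using $p_k(\boldsymbol{\psi}_l) = \delta_{kl}$ in the formula for $k$ collapses almost all terms and yields $k(\boldsymbol{\psi}_l, \bm{y}) = p_l(\bm{y})$, so the evaluation piece equals $\sum_l f(\boldsymbol{\psi}_l)\, p_l(\bm{y}) = \Pi_\mathcal{P} f(\bm{y})$. For the semi-inner product piece, step (iii) gives $\mathcal{R}^{-1}(k(\cdot, \bm{y}) - \Pi_\mathcal{P} k(\cdot, \bm{y})) = f_{\bm{y}}$; writing $g := \mathcal{R}^{-1}(f - \Pi_\mathcal{P} f) = \sum_j \zeta_j\, \Phi(\bm{x}_j - \cdot) \in \mathcal{F}_\Phi(\mathbb{R}^p)$ and unfolding the $\langle \cdot, \cdot\rangle_\Phi$ formula gives
\[ \langle g, f_{\bm{y}}\rangle_\Phi = \sum_j \zeta_j \Phi(\bm{x}_j - \bm{y}) - \sum_{k=1}^m p_k(\bm{y}) \sum_j \zeta_j \Phi(\bm{x}_j - \boldsymbol{\psi}_k) = g(\bm{y}) - \Pi_\mathcal{P} g(\bm{y}) = (f - \Pi_\mathcal{P} f)(\bm{y}). \]
Adding the two pieces telescopes cleanly to $f(\bm{y})$, establishing the reproducing property.

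The main obstacle is (ii), completeness. Positive definiteness in (i) is routine: the semi-inner product piece vanishes on $\mathcal{P}$ (its null space) and is strictly positive off $\mathcal{P}$ by the c.p.d. property of $\Phi$ applied to the representing coefficients, while the evaluation piece is strictly positive on nonzero elements of $\mathcal{P}$ by unisolvency, so the sum separates points. For completeness, once (iii) and (iv) are in hand the reproducing identity delivers the pointwise bound $|f_n(\bm{x}) - f_m(\bm{x})| \leq \|f_n - f_m\|\sqrt{k(\bm{x}, \bm{x})}$, so any Cauchy sequence converges pointwise to some $f^\star$. The delicate step is identifying $f^\star$ as a genuine element of $\mathcal{N}_\Phi(\mathbb{R}^p)$, which requires the Wendland-style abstract completion that extends the representing functionals $\sum_j \zeta_j\, \Phi(\bm{x}_j - \cdot)$ to suitable distributional limits; this analytic step is where essentially all the technical work lies and would close the proof.
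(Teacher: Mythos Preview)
The paper does not prove this theorem at all: it is quoted verbatim as Theorem 10.20 of \cite{Wen2005} and used as a black box in the proof of Theorem \ref{thm:interr}. There is therefore no ``paper's own proof'' to compare against.

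That said, your sketch is correct and is essentially the argument Wendland gives. The decomposition $k(\cdot,\bm{y}) = \mathcal{R}f_{\bm{y}} + \sum_k p_k(\cdot)p_k(\bm{y})$ via the auxiliary $f_{\bm{y}} \in \mathcal{F}_\Phi$, the collapse $k(\boldsymbol{\psi}_l,\bm{y}) = p_l(\bm{y})$ from the Lagrange property, and the telescoping of the two pieces of $\langle f, k(\cdot,\bm{y})\rangle$ to $f(\bm{y})$ are all exactly the standard moves. Your honest flagging of completeness as the analytic crux is appropriate: Wendland handles it by first completing $\mathcal{F}_\Phi$ abstractly and then showing the completion embeds into $C(\mathbb{R}^p)$ via the map $\mathcal{R}$, which is precisely the ``distributional limits'' step you allude to. Nothing is missing from your outline beyond filling in that routine (if tedious) completion argument.
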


\vspace{-0.4cm}

The following \textit{generalized Fourier transform} (GFT) will also be useful:
\begin{definition}[GFT; Defs. 8.8, 8.9 of \cite{Wen2005}] Suppose $f: \mathbb{R}^p \rightarrow \mathbb{C}$ is continuous and slowly increasing. A measurable function $\hat{f} \in \footnote{$L_2^{loc}$ denotes the space of locally $L_2$-integrable functions.}L_2^{loc}(\mathbb{R}^p \setminus \{0\})$ is called the \textup{generalized Fourier transform} of $f$ if $\exists m \in \mathbb{N}_0/2$ such that $\int_{\mathbb{R}^p} f(\bm{x}) \hat{\gamma}(\bm{x}) \; d\bm{x} = \int_{\mathbb{R}^p} \hat{f}(\omega) \gamma(\omega) \; d\omega$ is satisfied for all $\gamma \in \mathcal{S}_{2m}$, where $\hat{\gamma}$ denotes the standard Fourier transform of $\gamma$. Here, $\mathcal{S}_{2m} = \{ \gamma \in \mathcal{S} \; : \; \gamma(\omega) = \mathcal{O}(\|\omega\|_2^{2m}) \text{ for } \|\omega\|_2 \rightarrow 0\}$, where $\mathcal{S}$ is the Schwartz space.
\label{def:gft}
\end{definition}
\noindent Specific definitions for slowly increasing functions and Schwartz spaces can be found in Definitions 5.19 and 5.17 of \cite{Wen2005}. Here, the order of the GFT $\hat{f}$ refers to the value $m$ in Definition \ref{def:gft}, which can reside on the half-integers $\mathbb{N}_0/2$ since the index of the underlying space $\mathcal{S}_{2m}$ will still be an integer. \\

With these concepts in hand, we now present the Koksma-Hlawka-like bound. As demonstrated below, the choice of the negative distance kernel $\Phi = -\| \cdot \|_2$ is important for connecting integration error with the distance-based energy distance $E(F,F_n)$.
\begin{theorem}[Koksma-Hlawka]
Let $\{\bm{x}_i\}_{i=1}^n \subseteq \mathcal{X} \subseteq \mathbb{R}^p$ be a point set with e.d.f. $F_n$, and let $\Phi(\bm{x}) = -\| \bm{x} \|_2$. Then $\Phi$ is a c.p.d. kernel of order 1. Moreover:
\begin{enumerate}[label=(\alph*)]
\item The native space of $\Phi$, $\mathcal{N}_\Phi(\mathbb{R}^p)$, can be explicitly written as:
\begin{equation}
\hspace*{-0.8cm}
\mathcal{N}_\Phi(\mathbb{R}^p) = \left\{ f \in C(\mathbb{R}^p) : \begin{array}{l}
\vspace{0.2cm}
\textup{(G1) } \exists m \in \mathbb{N}_0 \text{ s.t. } f(\bm{x}) = \mathcal{O}(\|\bm{x}\|_2^m) \text{ for } \|\bm{x}\|_2 \rightarrow \infty\\
\textup{(G2) } f \text{ has a GFT } \hat{f} \text{ of order } 1/2\\
\textup{(G3) } \int \|\omega\|_2^{p+1} |\hat{f}(\omega)|^2 \; d\omega < \infty
\end{array} \right\},
\label{eq:g}
\end{equation}
with semi-inner product given by:
\begin{equation}
\langle f,g \rangle_{\mathcal{N}_\Phi(\mathbb{R}^p)} = \left\{ \Gamma((p+1)/2) 2^{p} \pi^{(p-1)/2} \right\}^{-1} \int \hat{f}(\omega) \overline{\hat{g}(\omega)} \|\omega\|_2^{p+1} \; d \omega,
\label{eq:inner}
\end{equation}
\item Consider the function space $\mathcal{G}_p = \mathcal{N}_\Phi(\mathbb{R}^p)$, equipped with inner product $\langle f,g \rangle_{\mathcal{G}_p} = \langle f,g \rangle_{\mathcal{N}_\Phi(\mathbb{R}^p)} + f(\boldsymbol{\psi})g(\boldsymbol{\psi})$ for a fixed choice of $\boldsymbol{\psi} \in \mathcal{X}$. Then $(\mathcal{G}_p, \langle \cdot, \cdot \rangle_{\mathcal{G}_p})$ is a RKHS, and for any integrand $g \in \mathcal{G}_p$, the integration error in \eqref{eq:interr} is bounded by:
\begin{equation}
I(g;F,F_n) \leq \|g\|_{\mathcal{G}_p} \sqrt{{E}(F,F_n)}, \quad \|g\|^2_{\mathcal{G}_p} \equiv \langle g,g \rangle_{\mathcal{G}_p}.
\label{eq:upbound}
\end{equation}
\end{enumerate}
\label{thm:interr}
\end{theorem}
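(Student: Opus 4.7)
My plan is to handle the three assertions in sequence, relying on generalized Fourier transform (GFT) machinery for the conditionally positive definite property and the native space characterization, and then deriving the integration bound by a reproducing-kernel Cauchy--Schwarz argument, exploiting the fact that for $\Phi = -\|\cdot\|_2$ the resulting quadratic form collapses exactly to the energy distance.

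First, to show that $\Phi$ is c.p.d. of order $1$, I would compute its GFT directly. By the duality property cited on pp.~173--174 of Gelfand--Shilov, the GFT of the Euclidean norm is a negative multiple of $\|\omega\|_2^{-(p+1)}$, so $\hat{\Phi}(\omega) = c_p \|\omega\|_2^{-(p+1)}$ for some constant $c_p > 0$ depending only on $p$, with GFT order $1/2$. Non-negativity of $\hat{\Phi}$ combined with the order condition then yields the c.p.d. property of order $1$ via Theorem~8.16 of \cite{Wen2005}. For part (a), I would invoke the GFT characterization of native spaces for c.p.d. kernels (Theorem~10.21 of \cite{Wen2005}), which states that $f \in \mathcal{N}_\Phi(\mathbb{R}^p)$ iff $f$ is slowly increasing (yielding (G1)), admits a GFT of matching order (yielding (G2)), and satisfies $\int |\hat{f}(\omega)|^2/\hat{\Phi}(\omega)\, d\omega < \infty$. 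Substituting $\hat{\Phi}(\omega) \propto \|\omega\|_2^{-(p+1)}$ converts this weighted $L_2$-condition into (G3) and the general inner-product formula into \eqref{eq:inner} after identifying constants.

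For part (b), note that when $m = 1$ the polynomial space $\mathcal{P}$ consists of constants, so any single point $\boldsymbol{\psi}$ is a $\mathcal{P}$-unisolvent set with Lagrange basis $p_1 \equiv 1$. Applying Theorem~\ref{thm:rkhs} together with $\Phi(\bm{0}) = 0$ then produces the reproducing kernel
\[
k(\bm{x}, \bm{y}) \;=\; -\|\bm{x} - \bm{y}\|_2 + \|\boldsymbol{\psi} - \bm{y}\|_2 + \|\bm{x} - \boldsymbol{\psi}\|_2 + 1,
\]
so $(\mathcal{G}_p, \langle \cdot, \cdot \rangle_{\mathcal{G}_p})$ is automatically an RKHS. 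Writing $I(g; F, F_n) = \int g\, d[F - F_n]$ and combining the reproducing property with Fubini and Cauchy--Schwarz yields
\[
I^2(g; F, F_n) \;\leq\; \|g\|_{\mathcal{G}_p}^2 \iint k(\bm{x}, \bm{y})\, d[F-F_n](\bm{x})\, d[F-F_n](\bm{y}).
\]
The crucial observation is then that $\int d[F-F_n] = 0$ kills the constant term and every single-variable summand of $k$, so the double integral reduces to $-\iint \|\bm{x} - \bm{y}\|_2\, d[F-F_n]\, d[F-F_n]$, which expands to precisely $E(F, F_n)$ as written in \eqref{eq:mvl2samp}. Taking square roots gives \eqref{eq:upbound}.

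The main obstacle I anticipate is constant bookkeeping: pinning down the exact value of $c_p$ in the GFT of $\Phi$ so that Wendland's general native-space theorem reproduces \eqref{eq:inner} with the precise prefactor $\{\Gamma((p+1)/2)\, 2^p\, \pi^{(p-1)/2}\}^{-1}$, and checking that the GFT order condition lines up with Definition~\ref{def:gft} at the half-integer value $1/2$. A secondary technical point is justifying the interchange of the RKHS inner product with the signed integral $d[F - F_n]$, for which I would verify that $\int k(\bm{x}, \cdot)\, d[F-F_n](\bm{x})$ lies in $\mathcal{G}_p$ using the at-most-linear growth of $\Phi$ together with the assumed finiteness of the first moment of $F$.
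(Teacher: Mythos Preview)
Your proposal is correct and follows essentially the same route as the paper: the GFT of $\Phi$ is computed via Wendland's Theorem~8.16 (the paper then cites Corollary~8.18 for the c.p.d.\ claim), the native space is identified via Theorem~10.21, the RKHS structure comes from Theorem~\ref{thm:rkhs} with $\mathcal{P}$ the constants, and the bound follows by the reproducing property plus Cauchy--Schwarz, with the single-variable terms of $\tilde{k}$ annihilated by $\int d[F-F_n]=0$. One small slip: the GFT of $\Phi$ itself has order $1$, not $1/2$ (it is the \emph{elements} of the native space that carry GFTs of order $1/2$, per (G2)); and the interchange you flag as secondary is handled in the paper exactly as you suggest, by showing $\mathcal{L}f=\int f\,dF$ is bounded on $\mathcal{G}_p$ (using $\int \tilde{k}^{1/2}(\bm{x},\bm{x})\,dF(\bm{x})<\infty$ from the finite-mean assumption) and invoking Riesz representation.
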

}

\begin{proof}
(Theorem \ref{thm:interr}) Consider first part (a). Let $\Phi(\cdot) = -\|\cdot\|_2$, and let $\hat{\Phi}$ be its GFT of order 1. {From Theorem 8.16 of \citep{Wen2005}, we have the following \textit{duality} representation:}
\[\hat{\Phi}(\omega) = \frac{2^{p/2}\Gamma(({p+1})/{2})}{\sqrt{\pi}}\|\omega\|_2^{-p-1}, \quad \omega \in \mathbb{R}^p \setminus \{0\}.\]
By Corollary 8.18 of \citep{Wen2005}, $\Phi(\cdot)$ is also c.p.d. of order 1. Using the fact that $\Phi(\cdot)$ is even along with the continuity of $\hat{\Phi}(\omega)$ on $\mathbb{R}^p \setminus \{0\}$, an application of Theorem 10.21 in \citep{Wen2005} completes the proof for part (a).\\

Consider now part (b). By Prop. 3 of \citep{SZ2013}, the kernel $\Phi(\cdot)$ is c.p.d. with respect to the space of constant functions $\mathcal{P} = \{f(\bm{x}) \equiv C \text{ for some } C \in \mathbb{R}\}$, with $\dim \mathcal{P} = 1$. Note that any choice of $\boldsymbol{\psi} \in \mathcal{X}$ provides a $\mathcal{P}$-unisolvent subset, with the Lagrange basis for the single point $\boldsymbol{\psi}$ being the unit function $p(\cdot) \equiv 1$. Hence, by Theorem \ref{thm:rkhs}, the native space $\mathcal{N}_{\Phi}(\mathbb{R}^p)$ can be transformed into a RKHS $\mathcal{G}_p$ by equipping it with a new inner product $\langle f,g \rangle_{\mathcal{G}_p} = \langle f,g \rangle_{\mathcal{N}_{\Phi}(\mathbb{R}^p)} + f(\boldsymbol{\psi})g(\boldsymbol{\psi})$. From the same theorem, the corresponding reproducing kernel for the RKHS $(\mathcal{G}_p, \langle \cdot, \cdot \rangle_{\mathcal{G}_p})$ becomes $\tilde{k}(\bm{x},\bm{y}) = \Phi(\bm{x}-\bm{y}) - \Phi(\boldsymbol{\psi}-\bm{y}) - \Phi(\boldsymbol{\psi}-\bm{x}) + 1$.\\

{Next, let $\tilde{k}_{\bm{x}}(\bm{z}) = \tilde{k}(\bm{x},\bm{z})$. We claim the function $\int \tilde{k}_{\bm{x}}(\cdot) \; d[F-F_n](\bm{x})$ belongs in $\mathcal{G}_p$. To see this, define the linear operator $\mathcal{L}: \mathcal{G}_p \rightarrow \mathbb{R}$ as $\mathcal{L}f = \int f(\bm{x}) \; dF(\bm{x})$. Note that $\mathcal{L}$ is a bounded operator, because for all $f \in \mathcal{G}_p$:
\begin{align*}
|\mathcal{L}f| = \Big|\int f(\bm{x}) \; dF(\bm{x}) \Big| &\leq \int |f(\bm{x})| \; dF(\bm{x})\\
&= \int | \langle f(\cdot), \tilde{k}_{\bm{x}}(\cdot) \rangle_{\mathcal{G}_p} | \; dF(\bm{x}) \quad \text{(RKHS reproducing property)}\\
&\leq \int \|f\|_{\mathcal{G}_p} \| \tilde{k}_{\bm{x}}(\cdot) \|_{\mathcal{G}_p} \; dF(\bm{x}) \quad \text{(Cauchy-Schwarz)}\\
&= \|f\|_{\mathcal{G}_p} \int \tilde{k}^{1/2}(\bm{x},\bm{x}) \; dF(\bm{x}), \quad \text{(RKHS kernel trick)}
\end{align*}
and the last expression must be bounded because $\int \tilde{k}^{1/2}(\bm{x},\bm{x}) \; dF(\bm{x}) \leq [\int \tilde{k}(\bm{x},\bm{x}) \; dF(\bm{x})]^{1/2}$, the latter of which is finite due to the earlier finite mean assumption on $F$. By the Riesz Representation Theorem (Theorem 8.12, \cite{HN2001}), there exists a unique $\tilde{f} \in \mathcal{G}_p$ satisfying $\mathcal{L}f = \int f(\bm{x}) \; dF(\bm{x}) = \langle f, \tilde{f} \rangle_{\mathcal{G}_p}$ for all $f \in \mathcal{G}_p$. Setting $f(\bm{x}) = \tilde{k}_{\bm{z}}(\bm{x})$ in this expression, we get $\int \tilde{k}_{\bm{z}}(\bm{x}) \; dF(\bm{x}) = \langle \tilde{k}_{\bm{z}}(\cdot), \tilde{f} \rangle_{\mathcal{G}_p} = \tilde{f}(\bm{z})$ by the RKHS reproducing property, so $\tilde{f} = \int \tilde{k}_{\bm{x}}(\cdot)\; dF(\bm{x}) \in \mathcal{G}_p$. Finally, note that $\int \tilde{k}_{\bm{x}}(\cdot) \; dF_n(\bm{x}) \in \mathcal{G}_p$ because a RKHS is closed under addition, so $\int \tilde{k}_{\bm{x}}(\cdot) \; d[F-F_n](\bm{x}) \in \mathcal{G}_p$, as desired.}\\

With this in hand, the integration error can be bounded as follows:
\begin{align*}
I(g;F,F_n) &= \left| \int g(\bm{x}) \; d[F-F_n](\bm{x}) \right| &\\
& = \left| \int \Big\langle g(\cdot), \tilde{k}_{\bm{x}}(\cdot) \Big\rangle_{\mathcal{G}_p} \; d[F-F_n](\bm{x}) \right| & \text{(Reproducing property)}\\
& = \left| \Big\langle g(\cdot), \int \tilde{k}_{\bm{x}}(\cdot) \; d[F-F_n](\bm{x}) \Big\rangle_{\mathcal{G}_p} \right| & \\
& \leq \| g \|_{\mathcal{G}_p} \left\| \int \tilde{k}_{\bm{x}}(\cdot) \; d[F-F_n](\bm{x}) \right\|_{\mathcal{G}_p}.& \text{(Cauchy-Schwarz)}
\end{align*}
The last term can be rewritten as:
\small
\begin{align*}
\sqrt{\left\| \int \tilde{k}_{\bm{x}}(\cdot)\; d[F-F_n](\bm{x}) \right\|^2_{\mathcal{G}_p}} &= \sqrt{ \Big\langle \int \tilde{k}_{\bm{x}}(\cdot)\; d[F-F_n](\bm{x}), \int \tilde{k}_{\bm{y}}(\cdot) \; d[F-F_n](\bm{y}) \Big\rangle_{\mathcal{G}_p}} \\
\normalsize
& = \sqrt{ \int \int \langle \tilde{k}_{\bm{x}}(\cdot), \tilde{k}_{\bm{y}}(\cdot) \rangle_{\mathcal{G}_p} \; d[F-F_n](\bm{x}) \; d[F-F_n](\bm{y}) }\\
& = \sqrt{ \int \int \tilde{k}(\bm{x},\bm{y}) \; d[F-F_n](\bm{x}) \; d[F-F_n](\bm{y}) }\\
& \hspace{6cm} \text{(Kernel trick)}\\
& = \sqrt{ \int \int \Phi(\bm{x}-\bm{y}) \; d[F-F_n](\bm{x}) \; d[F-F_n](\bm{y}) }\\
&= \sqrt{E(F,F_n)}, \hspace{3.5cm} \text{(Equation \eqref{eq:mvl2samp})}
\end{align*}
\normalsize
where the second-last step follows because $\int \Phi(\boldsymbol{\psi}-\bm{y}) \; d[F-F_n](\bm{x}) = \int \Phi(\boldsymbol{\psi}-\bm{x}) \; d[F-F_n](\bm{y}) = \int d[F-F_n](\bm{x}) = 0$. This completes the proof.
\end{proof}

The appeal of Theorem \ref{thm:interr} is that it connects the integration error $I(g;F,F_n)$ with the energy distance ${E}(F,F_n)$ for all integrands $g$ in the function space $\mathcal{G}_p$. Similar to the usual Koksma-Hlawka inequality, such a theorem justifies the use of support points for integration, because the integration error for all functions in $\mathcal{G}_p$ can be sufficiently bounded by minimizing ${E}(F,F_n)$.

{A natural question to ask is how large $\mathcal{G}_p$ is compared with the commonly-used Sobolev space $W_{s,2}$, i.e., the set of functions whose $s$-th order differentials have finite $L_2$ norm. Such a comparison is particularly important in light of the fact that an anchored variant of the Sobolev space is typically employed in QMC analysis (see, e.g., \citep{Dea2013}). Recall that $s$ can be extended to the non-negative real numbers using fractional calculus, in which case $W_{s,2}$ becomes the fractional Sobolev space. By comparing the definition of the fractional Sobolev space in the Fourier domain (see (3.7) in \cite{Nea2012}), one can show that $W_{(p+1)/2,2}$ is contained within $\mathcal{G}_p$. Moreover, using the fact that $W_{s,2}$ is a decreasing family as $s>0$ increases (see paragraph prior to Prop. 1.52 in \citep{Bea2011}), it follows that $W_{\lceil (p+1)/2 \rceil,2} \subseteq W_{(p+1)/2,2} \subseteq \mathcal{G}_p$. In fact, for odd dimensions $p$, Theorem 10.43 of \citep{Wen2005} shows that $\mathcal{G}_p$ is indeed equal to the Sobolev space $W_{\lceil (p+1)/2 \rceil,2} = W_{(p+1)/2,2}$, so the embedding result becomes an equality.

Viewing this embedding now in terms of Theorem \ref{thm:interr}, it follows that all integrands $g$ with square-integrable $\lceil (p+1)/2 \rceil$-th order differentials enjoy the upper bound in \eqref{eq:upbound}. Hence, as dimension $p$ grows, an increasing order of smoothness is required for integration using support points, which appears to be a necessary trade-off for the appealing d.c. formulation in \eqref{eq:spdef}. This is similar to the anchored Sobolev spaces employed in QMC, which requires integrands to have square-integrable mixed first derivatives.}

\subsubsection{Error convergence rate}
Next, we investigate the convergence rate of $I(g;F,F_n)$ under support points. Under eigenvalue decay conditions, the following theorem establishes an \textit{existence} result, which demonstrates the existence of a point set sequence achieving a particular error rate. An additional theorem then clarifies when such decay conditions are satisfied in practice. The main purpose of these results is to demonstrate the quicker \textit{theoretical} convergence of support points over Monte Carlo. From the simulations in Section \ref{sec:sim}, the rate below does not appear to be tight, and a quicker convergence rate is conjectured in Appendix A.3 of the supplemental article \citep{MJ2017supp}.

\begin{theorem}[Error rate]
Let $F_n$ be the e.d.f. for support points $\{\boldsymbol{\xi}\}_{i=1}^n$, and let $g \in \mathcal{G}_p$. Define the kernel $k(\bm{x},\bm{y}) = \mathbb{E}\|\bm{x}-\bm{Y}\|_2 + \mathbb{E}\|\bm{y}-\bm{Y}\|_2 - \mathbb{E}\|\bm{Y}-\bm{Y}'\|_2 - \|\bm{x}-\bm{y}\|_2$, $\bm{Y},\bm{Y}' \distas{i.i.d.} F$. If (a) $\mathbb{E}[\|\bm{Y}\|_2^3] < \infty$, and (b) the weighted eigenvalues of $k$ under $F$ satisfy $\sum_{k=1}^\infty \lambda_k^{1/\alpha} < \infty$ for some $\alpha > 1$, then:
\begin{equation}
I(g;F,F_n) = \mathcal{O}\{\|g\|_{\mathcal{G}_p} {n}^{-1/2}(\log n)^{-(\alpha-1)/2}\},
\label{eq:intconvrate}
\end{equation}
with constant terms depending on $\alpha$ and $p$.
\label{thm:convrate}
\end{theorem}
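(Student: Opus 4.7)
The plan is to combine the Koksma--Hlawka-like bound in Theorem~\ref{thm:interr}(b) with a spectral analysis of $E(F,F_n)$, and then exploit the minimality of support points to reduce the claim to an existence statement. By Theorem~\ref{thm:interr}(b), $I(g;F,F_n)\leq\|g\|_{\mathcal{G}_p}\sqrt{E(F,F_n)}$, so it suffices to show $\inf_{F_n}E(F,F_n)=O(n^{-1}(\log n)^{-(\alpha-1)})$; since support points realize this infimum by Definition~\ref{def:spdef}, it is enough to exhibit \emph{any} sequence of $n$-point configurations attaining this rate, and the support points will inherit it for free.

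First, I would apply Mercer's theorem to $k$ on $L_2(F)$ to write $k(\bm{x},\bm{y})=\sum_{j\geq1}\lambda_j\phi_j(\bm{x})\phi_j(\bm{y})$ with $\{\phi_j\}$ orthonormal in $L_2(F)$. The centering identity $\mathbb{E}k(\bm{x},\bm{Y})=0$, which is immediate from the definition of $k$, forces $\mathbb{E}\phi_j(\bm{Y})=0$ for every $j\geq1$ and yields the transparent decomposition $E(F,F_n)=\sum_{j\geq 1}\lambda_j\bar\phi_j^{\,2}$, where $\bar\phi_j=n^{-1}\sum_{i=1}^n\phi_j(\bm{x}_i)$. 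Thus minimizing $E(F,F_n)$ amounts to a weighted simultaneous minimization of squared sample means of infinitely many centered $L_2(F)$-orthonormal functions, with weights $\lambda_j$.

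Second, I would exhibit a competing configuration by a head/tail split at a cutoff $K=K(n)$. For the tail $\sum_{j>K}\lambda_j\bar\phi_j^{\,2}$, a H\"older-type inequality applied to $\sum_{j>K}\lambda_j=\sum_{j>K}\lambda_j^{1/\alpha}\cdot\lambda_j^{(\alpha-1)/\alpha}$ together with the summability assumption $\sum_j\lambda_j^{1/\alpha}<\infty$ yields $\sum_{j>K}\lambda_j=o(K^{-(\alpha-1)})$, which after crude bounds on $\bar\phi_j^{\,2}$ contributes $O(K^{-(\alpha-1)}/n)$. For the head $\sum_{j\leq K}\lambda_j\bar\phi_j^{\,2}$, I would use a randomized construction---either drawing many independent $n$-samples from $F$ and keeping the one with smallest energy distance, or iterating a conditional-gradient / kernel-herding step---and apply Bernstein-style concentration of each $\bar\phi_j$ at scale $n^{-1/2}$ combined with a union bound over the $K$ coordinates. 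The moment hypothesis $\mathbb{E}\|\bm{Y}\|_2^3<\infty$ is exactly what makes these inequalities applicable: it guarantees finite variance of $k(\bm{Y},\bm{Y}')$ and enough integrability of the $\phi_j(\bm{Y})$'s to feed into Bernstein/Hoeffding tails. Balancing head against tail at $K\asymp\log n$ produces $E(F,F_n)=O(n^{-1}(\log n)^{-(\alpha-1)})$, and Theorem~\ref{thm:interr}(b) then finishes the bound on $I(g;F,F_n)$.

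The hard part will be the head-bound concentration step: one has to couple the weighted structure $\sum_{j\leq K}\lambda_j\bar\phi_j^{\,2}$ with tail inequalities on the $\bar\phi_j$ in such a way that precisely the logarithmic factor $(\log n)^{-(\alpha-1)/2}$ emerges, while keeping the constants depending only on $\alpha$ and $p$ and not on $F$ beyond its third moment. Once a competing point set witnessing the claimed rate is produced, the definitional minimality of support points closes the argument without further work.
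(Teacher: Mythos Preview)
Your high-level scaffolding matches the paper: reduce via Theorem~\ref{thm:interr}(b) to bounding $E(F,F_n)$, use minimality so that any competing $n$-point configuration suffices, pass through the Mercer/spectral representation $E(F,F_n)=\sum_j\lambda_j\bar\phi_j^{\,2}$, and look for an i.i.d.\ sample realizing the target rate. But the concrete tool you propose for the head term is the wrong one, and this is a genuine gap.

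Bernstein/Hoeffding plus a union bound over $j\le K$ are \emph{upper-tail} devices: they give, with high probability, $|\bar\phi_j|\lesssim\sqrt{(\log K)/n}$, hence head $\lesssim(\sum_{j\le K}\lambda_j)(\log K)/n$, which is \emph{worse} than the mean $\sum_j\lambda_j/n$, not better by a log factor. What you actually need is a \emph{small-ball / anti-concentration} statement: that $\mathbb{P}\bigl(n\sum_j\lambda_j\bar\phi_j^{\,2}\le x\bigr)>0$ for $x\asymp(\log n)^{-(\alpha-1)}$. The paper obtains this by (i) recognizing $nE(F,\tilde F_n)$ as a degenerate $V$-statistic $W_n$ with limit law $W_\infty=\sum_j\lambda_j\chi_j^2$, (ii) applying the Paley--Zygmund inequality to $Z_t=e^{-tW_\infty}$ with a carefully tuned $t=t(x)$, which converts the summability condition $\sum_j\lambda_j^{1/\alpha}<\infty$ into the left-tail lower bound $F_{W_\infty}(x)\gtrsim\exp\{-c\,x^{-1/(\alpha-1)}\}$, and (iii) transferring this to $F_{W_n}$ via a uniform Berry--Esseen-type rate for degenerate $V$-statistics. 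Step (iii) is precisely where the hypothesis $\mathbb{E}\|\bm{Y}\|_2^3<\infty$ is used (to control $\mathbb{E}|k(\bm{Y}_1,\bm{Y}_2)|^3$), not to feed Bernstein tails on individual $\phi_j$'s as you suggest. Once $F_{W_n}(x^\ast)>0$ at $x^\ast\asymp(\log n)^{-(\alpha-1)}$, minimality gives $nE(F,F_n)\le x^\ast$ and Theorem~\ref{thm:interr}(b) finishes. Your head/tail split at $K\asymp\log n$ could in principle be made to work, but only after replacing the concentration step by a genuine small-ball argument of the Paley--Zygmund type.
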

\noindent Here, the weighted eigenvalue sequence of $k$ under $F$ is the decreasing sequence $(\lambda_k)_{k=1}^\infty$ satisfying $\lambda_k \phi_k(\bm{x}) = \mathbb{E} [k(\bm{x},\bm{Y}) \phi_k(\bm{Y})]$, $\mathbb{E}[\phi_k^2(\bm{Y})] = 1$.

The proof of this theorem exploits the fact that $E(F,F_n)$ is a goodness-of-fit statistic. Specifically, writing ${E}(F,F_n)$ as a {degenerate V-statistic} $V_n$, we appeal to its limiting distribution and a uniform Barry-Esseen-like rate to derive an upper bound for the minimum of $V_n$. The full proof is outlined below, and relies on the following lemmas.

\begin{lemma}
\textup{(\citep{Ser2009})} Let $(\bm{Y}_i)_{i=1}^\infty \distas{i.i.d.} F$, and let $k$ be a symmetric, positive definite (p.d.) kernel with $\mathbb{E}[k(\bm{x},\bm{Y}_1)] = 0$, $\mathbb{E}[k^2(\bm{Y}_1, \bm{Y}_2)] < \infty$ and $\mathbb{E}|k(\bm{Y}_1, \bm{Y}_1)| < \infty$. Define the V-statistic $V_n \equiv n^{-2}\sum_{i=1}^n \sum_{j=1}^n k(\bm{Y}_i,\bm{Y}_j)$. Then $W_n \equiv n V_n \xrightarrow{d} \sum_{k=1}^\infty \lambda_k \chi_{k}^2 \equiv W_{\infty}$, where $(\chi_{k}^2)_{k=1}^\infty \distas{i.i.d.} \chi^2(1)$, and $(\lambda_k)_{k=1}^\infty$ are the weighted eigenvalues of $k$ under $F$.
\label{lem:asymp}
\end{lemma}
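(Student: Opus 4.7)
The plan is to reduce the lemma to the classical spectral argument for degenerate $V$-statistics; since the statement is already attributed to \citep{Ser2009}, I would mainly cite that reference while outlining the three-step structure of the proof in the paper.

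First I would invoke the Mercer/Karhunen--Lo\`eve expansion of the symmetric positive-definite kernel $k$. The Hilbert--Schmidt condition $\mathbb{E}[k^2(\bm{Y}_1,\bm{Y}_2)] < \infty$ guarantees a mean-square convergent representation $k(\bm{x},\bm{y}) = \sum_{j=1}^{\infty} \lambda_j \phi_j(\bm{x}) \phi_j(\bm{y})$ with eigenfunctions $(\phi_j)$ orthonormal in $L_2(F)$, and the degeneracy hypothesis $\mathbb{E}[k(\bm{x},\bm{Y}_1)] = 0$ forces $\mathbb{E}[\phi_j(\bm{Y}_1)] = 0$ for every $j$ by projecting onto each eigenfunction. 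Substituting the expansion and interchanging the double sum then gives
\[
n V_n \;=\; \sum_{j=1}^{\infty} \lambda_j Z_{n,j}^2, \qquad Z_{n,j} \;\equiv\; n^{-1/2} \sum_{i=1}^{n} \phi_j(\bm{Y}_i).
\]

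Next I would apply the multivariate CLT: for any finite truncation level $K$, the vector $(Z_{n,1},\ldots,Z_{n,K})$ converges in distribution to a vector of independent $N(0,1)$ components, since the $\phi_j$ are centered, have unit variance, and are mutually uncorrelated under $F$. The continuous mapping theorem then yields $\sum_{j=1}^K \lambda_j Z_{n,j}^2 \xrightarrow{d} \sum_{j=1}^K \lambda_j \chi_j^2$.

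The main obstacle, and the only real subtlety, is passing from this truncated limit to the full infinite series. I would handle this by the standard tail-control argument: because $k$ is positive definite with $\mathbb{E}|k(\bm{Y}_1,\bm{Y}_1)| < \infty$, the trace $\sum_j \lambda_j$ is finite, and using $\mathbb{E}[Z_{n,j}^2] = 1$ one obtains $\mathbb{E}\bigl[\sum_{j>K} \lambda_j Z_{n,j}^2\bigr] = \sum_{j>K} \lambda_j \to 0$ uniformly in $n$ as $K \to \infty$, with an analogous Markov-type bound for $\sum_{j>K} \lambda_j \chi_j^2$. Combining the finite-dimensional weak convergence with these uniform tail estimates via the standard weak-convergence-by-approximation lemma concludes that $n V_n \xrightarrow{d} \sum_{j=1}^{\infty} \lambda_j \chi_j^2$, as claimed; a complete rendering of this template appears in \citep{Ser2009}.
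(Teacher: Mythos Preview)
Your outline is correct and matches the classical spectral argument for degenerate $V$-statistics from \cite{Ser2009}; the paper itself does not supply a proof of this lemma at all but simply states it as a cited result, so your sketch is entirely consistent with (and more detailed than) what the paper provides.
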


\begin{lemma}
\textup{(\citep{KB1989})} Adopt the same notation as in Lemma \ref{lem:asymp}, and let $F_{W_n}$ and $F_{W_\infty}$ denote the d.f.s for $W_n$ and $W_\infty$. If $\mathbb{E}[k(\bm{x},\bm{Y}_1)] = 0$, $\mathbb{E}|k(\bm{Y}_1, \bm{Y}_2)|^3 < \infty$ and $\mathbb{E}|k(\bm{Y}_1, \bm{Y}_1)|^{3/2} < \infty$, then:
\begin{equation}
\sup_{x} |F_{W_n}(x) - F_{W_\infty}(x)| = \smallO(n^{-1/2}),
\label{eq:conc}
\end{equation}
\label{lem:conc}
with constants depending on dimension $p$.
\end{lemma}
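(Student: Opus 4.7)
The plan is to reduce the uniform cdf bound on the degenerate V-statistic to Berry-Esseen-type control of finite-dimensional quadratic forms via Mercer's spectral expansion of the kernel. Under $F$, the symmetric p.d. kernel $k$ admits the expansion $k(\bm{x},\bm{y}) = \sum_{\ell\geq 1}\lambda_\ell \phi_\ell(\bm{x})\phi_\ell(\bm{y})$ with $\mathbb{E}[\phi_\ell(\bm{Y})\phi_{\ell'}(\bm{Y})] = \delta_{\ell\ell'}$, and the degeneracy assumption $\mathbb{E}[k(\bm{x},\bm{Y}_1)]=0$ forces $\mathbb{E}[\phi_\ell(\bm{Y}_1)]=0$. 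Setting $U_{n,\ell} = n^{-1/2}\sum_{i=1}^n \phi_\ell(\bm{Y}_i)$ places both cdfs in a common form,
\[ W_n = \sum_{\ell\geq 1}\lambda_\ell U_{n,\ell}^2, \qquad W_\infty = \sum_{\ell\geq 1}\lambda_\ell Z_\ell^2, \qquad Z_\ell \distas{i.i.d.} N(0,1), \]
reducing the problem to comparing the empirical coordinates $U_{n,\ell}$ to their Gaussian counterparts inside the quadratic form.

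Next I would truncate at a level $K = K(n)\to\infty$, writing $W_n = W_n^{(K)} + R_n^{(K)}$ and $W_\infty = W_\infty^{(K)} + R_\infty^{(K)}$ with $W_n^{(K)} = \sum_{\ell\leq K}\lambda_\ell U_{n,\ell}^2$. The trace identity $\sum_{\ell\geq 1}\lambda_\ell = \mathbb{E}[k(\bm{Y}_1,\bm{Y}_1)] < \infty$, guaranteed by the $3/2$-moment assumption on the diagonal, together with $\mathbb{E}[U_{n,\ell}^2]=1$, yields $\mathbb{E}[R_\infty^{(K)}] = \mathbb{E}[R_n^{(K)}] = \sum_{\ell>K}\lambda_\ell \to 0$, so Markov's inequality forces both tails below any prescribed $\varepsilon_K$ in probability. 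For the truncated quadratic form, I would apply a multivariate Berry-Esseen bound (e.g. Bentkus' bound for convex sets, or Götze's for smooth maps) to the standardized sum $(U_{n,1},\dots,U_{n,K})$ against $(Z_1,\dots,Z_K)$ on the ellipsoidal sublevel sets $\{\bm{u}\in\mathbb{R}^K : \sum_{\ell\leq K}\lambda_\ell u_\ell^2 \leq t\}$. The hypothesis $\mathbb{E}|k(\bm{Y}_1,\bm{Y}_2)|^3 < \infty$ translates, via Mercer, into uniform control of the third moments $\mathbb{E}|\phi_\ell(\bm{Y}_1)|^3$, producing a Kolmogorov-distance bound of the form $C(K) n^{-1/2}$ for $\sup_t|\mathbb{P}(W_n^{(K)} \leq t) - \mathbb{P}(W_\infty^{(K)} \leq t)|$, with $C(K)$ only polynomial in $K$.

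To stitch the truncated and tail pieces back together at the level of cdfs I would invoke an anti-concentration step. Because $\lambda_1 > 0$ and $W_\infty$ is a convergent series of independent scaled chi-squares, $W_\infty$ is absolutely continuous with a bounded density, so $|F_{W_\infty}(x) - F_{W_\infty^{(K)}}(x)| \leq \mathbb{P}(|R_\infty^{(K)}| > \delta) + \|f_{W_\infty}\|_\infty\,\delta$, and a parallel smoothing gives the same bound for $F_{W_n} - F_{W_n^{(K)}}$. Combining the three inequalities produces the master bound
\[ \sup_x |F_{W_n}(x) - F_{W_\infty}(x)| \;\leq\; C(K)\,n^{-1/2} \;+\; 2\,\mathbb{P}(|R^{(K)}|>\delta) \;+\; O(\delta). \]

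The main obstacle is calibrating $K = K(n)$ and $\delta = \delta(n)$ so that all three terms decay \emph{faster} than $n^{-1/2}$, producing the $\mathcal{o}(n^{-1/2})$ rate rather than the naive $\mathcal{O}(n^{-1/2})$ a single Berry-Esseen step would yield. Since $C(K)$ is polynomial in $K$ under the third-moment hypothesis and $\sum_{\ell>K}\lambda_\ell$ is summable, taking $K(n)\to\infty$ slowly (a suitable power of $\log n$) with $\delta(n) = \mathcal{o}(n^{-1/2})$ achieves the required balance; the constants inherit their dependence on the ambient dimension $p$ through $\mathbb{E}|k(\bm{Y}_1,\bm{Y}_2)|^3$, matching the lemma's stated $p$-dependence.
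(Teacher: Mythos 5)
You should first note that the paper does not prove this statement at all: Lemma \ref{lem:conc} is imported verbatim from the cited reference \citep{KB1989}, so any self-contained argument must essentially reproduce that reference's characteristic-function analysis for degenerate V-statistics. Measured against what such a proof requires, your truncation-plus-Berry--Esseen scheme has a fatal calibration gap. Your master bound is $\sup_x|F_{W_n}(x)-F_{W_\infty}(x)| \leq C(K)\,n^{-1/2} + 2\,\mathbb{P}(|R^{(K)}|>\delta) + O(\delta)$ with $C(K)$ \emph{increasing} in $K$; once you let $K=K(n)\to\infty$, the first term is of order $C(K(n))\,n^{-1/2}\gg n^{-1/2}$, so the sum can never be $\mathcal{o}(n^{-1/2})$ --- at best you recover something slightly \emph{worse} than $O(n^{-1/2})$. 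The deeper point is that no multivariate Berry--Esseen bound over convex (or ellipsoidal) sets can deliver a rate beating $n^{-1/2}$: the reason degenerate quadratic functionals converge faster than $n^{-1/2}$ is that the leading Edgeworth (skewness) correction cancels for the symmetric quadratic form, a cancellation visible only through a characteristic-function or Edgeworth-type expansion of $W_n$ itself, which is precisely the machinery of \citep{KB1989} and is absent from your argument.

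There are secondary gaps as well. The tail calibration $\mathbb{P}(|R_n^{(K)}|>\delta)\leq \delta^{-1}\sum_{\ell>K}\lambda_\ell$ with $\delta=\mathcal{o}(n^{-1/2})$ needs $\sum_{\ell>K(n)}\lambda_\ell=\mathcal{o}(n^{-1})$, but this lemma assumes no eigenvalue decay rate (that assumption belongs to Theorem \ref{thm:convrate}), so a slowly growing $K(n)$ of polylogarithmic size cannot be justified; $K(n)$ may have to grow fast, inflating $C(K)$ further. Your claim that $\mathbb{E}|k(\bm{Y}_1,\bm{Y}_2)|^3<\infty$ gives \emph{uniform} third-moment control of the eigenfunctions is also not right: from $\lambda_\ell\phi_\ell(\bm{x})=\mathbb{E}[k(\bm{x},\bm{Y})\phi_\ell(\bm{Y})]$ one only gets $\mathbb{E}|\phi_\ell(\bm{Y})|^3\leq \lambda_\ell^{-3}\,\mathbb{E}|k(\bm{Y}_1,\bm{Y}_2)|^3$, so the Berry--Esseen constant depends on the unassumed eigenvalue decay and need not be polynomial in $K$. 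Finally, the anti-concentration step requires at least two nonzero eigenvalues for $W_\infty$ to have a bounded density (a single scaled $\chi^2_1$ has a density blowing up at the origin), and the smoothing bound for $F_{W_n}-F_{W_n^{(K)}}$ cannot be asserted ``in parallel'' since $W_n^{(K)}$ has no density bound uniform in $n$; these are fixable, but the calibration and cancellation issues above are not within this approach.
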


\begin{lemma}[Paley-Zygmund inequality; \citep{PZ1930}] Let $X \geq 0$, with constants $a_1>1$ and $a_2>0$ satisfying $\mathbb{E}(X^2) \leq a_1 \mathbb{E}^2(X)$ and $\mathbb{E}(X) \geq a_2$. Then, for any $\theta \in (0,1)$, $\mathbb{P}(X \geq a_2 \theta) \geq (1-\theta)^2/a_1$.
\label{lem:pz}
\end{lemma}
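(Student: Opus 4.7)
The plan is to use the standard two-line proof of Paley--Zygmund via a split-and-Cauchy--Schwarz argument, and then translate the resulting ratio bound into the stated form using the two hypotheses on $a_1$ and $a_2$. No heavy machinery is needed; the entire argument is elementary once the right decomposition is written down.

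First I would decompose the expectation of $X$ according to whether $X$ lies above or below the threshold $t \equiv \theta\, \mathbb{E}(X)$:
\begin{equation*}
\mathbb{E}(X) = \mathbb{E}\bigl(X \,\mathbf{1}\{X < t\}\bigr) + \mathbb{E}\bigl(X\, \mathbf{1}\{X \geq t\}\bigr).
\end{equation*}
The first term is bounded trivially by $t = \theta\,\mathbb{E}(X)$ since $X \geq 0$ and the indicator forces $X < t$. For the second term, the Cauchy--Schwarz inequality gives
\begin{equation*}
\mathbb{E}\bigl(X\, \mathbf{1}\{X \geq t\}\bigr) \;\leq\; \sqrt{\mathbb{E}(X^2)}\; \sqrt{\mathbb{P}(X \geq t)}.
\end{equation*}
Combining these two bounds and subtracting $\theta\, \mathbb{E}(X)$ from both sides yields $(1-\theta)\,\mathbb{E}(X) \leq \sqrt{\mathbb{E}(X^2)}\,\sqrt{\mathbb{P}(X \geq t)}$, which I would square and rearrange into the ratio form
\begin{equation*}
\mathbb{P}\bigl(X \geq \theta\,\mathbb{E}(X)\bigr) \;\geq\; \frac{(1-\theta)^2\, \mathbb{E}^2(X)}{\mathbb{E}(X^2)}.
\end{equation*}

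To finish, I would invoke the two hypotheses. The assumption $\mathbb{E}(X^2) \leq a_1 \mathbb{E}^2(X)$ cancels the $\mathbb{E}^2(X)$ on the right and replaces the denominator by $a_1$, giving $\mathbb{P}(X \geq \theta\,\mathbb{E}(X)) \geq (1-\theta)^2/a_1$. Since $\mathbb{E}(X) \geq a_2$, the threshold $\theta\,\mathbb{E}(X)$ is at least $\theta a_2$, so the event $\{X \geq \theta\,\mathbb{E}(X)\}$ is contained in $\{X \geq \theta a_2\}$, which means $\mathbb{P}(X \geq \theta a_2) \geq \mathbb{P}(X \geq \theta\,\mathbb{E}(X)) \geq (1-\theta)^2/a_1$, exactly as claimed.

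Since the proof is essentially two one-line inequalities stitched together, there is no significant obstacle; the only place to be careful is the direction of the set inclusion at the very end (raising the threshold shrinks the event, so the probability at the lower threshold $\theta a_2$ is at least the probability at the higher threshold $\theta\,\mathbb{E}(X)$). I would also note that the finiteness of $\mathbb{E}(X^2)$ is implicitly used in applying Cauchy--Schwarz, and that the condition $a_1 > 1$ guarantees the ratio bound is nontrivial but is not otherwise needed in the derivation.
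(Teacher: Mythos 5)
Your proof is correct. The paper itself gives no proof of this lemma---it is stated as the classical Paley--Zygmund inequality and cited to \citep{PZ1930}---and your argument (split $\mathbb{E}(X)$ at the threshold $\theta\,\mathbb{E}(X)$, apply Cauchy--Schwarz to the upper piece, square and use $\mathbb{E}(X^2)\leq a_1\mathbb{E}^2(X)$, then use $\mathbb{E}(X)\geq a_2$ and monotonicity of the tail to pass from the event $\{X\geq\theta\,\mathbb{E}(X)\}$ to $\{X\geq\theta a_2\}$) is exactly the standard proof of that result, with the set-inclusion direction handled correctly.
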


\noindent The proof of Theorem \ref{thm:convrate} then follows:

\begin{proof} (Theorem \ref{thm:convrate}) Following Section 7.4 of \cite{SZ2013}, the energy distance ${E}(F,F_n)$ can be written as the order-2 $V$-statistic:
\begin{equation}
{E}(F,F_n) = \frac{1}{n^2}\sum_{i=1}^n \sum_{j=1}^n k(\boldsymbol{\xi}_i,\boldsymbol{\xi}_j) \leq \frac{1}{n^2}\sum_{i=1}^n \sum_{j=1}^n k(\bm{Y}_i,\bm{Y}_j) \equiv V_n,
\end{equation}
where $k(\bm{x},\bm{y})$ is defined in Theorem \ref{thm:convrate} and $(\bm{Y}_i)_{i=1}^n \distas{i.i.d.} F$. The last inequality follows by the definition of support points.

By \citep{Yan2012}, the kernel $k$ is symmetric and p.d., and the conditions for Lemma \ref{lem:asymp} can easily be shown to be satisfied. Invoking this lemma, we have:
\begin{equation}
\inf\{x: F_{W_n}(x) > 0\} = n {E}(F,F_n),
\label{eq:equiv}
\end{equation}
The strategy is to lower bound the left-tail probability of $W_{\infty}$, then use this to derive an upper bound for $\inf\{x: F_{W_n}(x) > 0\}$ using Lemma \ref{lem:conc}.

We first investigate the left-tail behavior of $W_{\infty}$. Define $Z_t = \exp\{-tW_{\infty}\}$ for some $t > 0$ to be determined later. Since $Z_t$ is bounded a.s., $\mathbb{E}(Z_t) = \prod_{k=1}^\infty (1+2\lambda_k t)^{-1/2}$ and $\mathbb{E}(Z_t^2) = \prod_{k=1}^\infty (1+4\lambda_k t)^{-1/2}$. From Lemma \ref{lem:pz}, it follows that, for fixed $x >0$, if our choice of $t$ satisfies:
\begin{equation}
\textbf{[A1]}: \;\mathbb{E}(Z_t) \geq 2\exp\{-tx\} >  \exp\{-tx\}, \;  \textbf{[A2]}: \; \mathbb{E}(Z_t^2) \leq a_1\mathbb{E}^2(Z_t),
\label{eq:conds}
\end{equation}
then, setting $\theta = 1/2$ and $a_2 =  2\exp\{-tx\}$, we have:
\begin{equation}
F_{W_{\infty}}(x) = \mathbb{P}(Z_t \geq \exp\{-tx\}) \geq \mathbb{P}(Z_t \geq \mathbb{E}(Z_t)/2) \geq (4a_1)^{-1}.
\label{eq:cdfw}
\end{equation}

Consider \textbf{[A1]}, or equivalently: $tx \geq \log 2 + (1/2)\sum_{k=1}^\infty \log (1+2 \lambda_k t)$. Since $\log (1+x) \leq x^q/q$ for $x > 0$ and $0 < q < 1$, and $\sum_{k=1}^\infty \lambda_k^{1/\alpha} < \infty$ by assumption, a sufficient condition for \textbf{[A1]} is:
\begin{align*}
tx \geq \log 2 + ({\alpha}/{2})\sum_{k=1}^\infty (2 \lambda_k t)^{1/\alpha} & \Leftrightarrow \; P_\alpha(s) \equiv s^\alpha - b_p s x^{-1} - (\log 2)x^{-1} \geq 0,
\end{align*}
where $s = t^{1/\alpha}$ and $b_p = \alpha 2^{1/\alpha-1}\sum_{k=1}^\infty \lambda_k^{1/\alpha}>0$.

Since $\log 2 > 0$ and $b_p s x^{-1} > 0$, there exists exactly one (real) positive root for $P_\alpha(s)$. Call this root $r$, so the above inequality is satisfied for $s > r$. Define $\bar{P}_\alpha(s)$ as the linearization of $P_\alpha(s)$ for $s > \bar{s}=(b_px^{-1})^{1/(\alpha-1)}$, i.e.:
\[
\bar{P}_\alpha(s) = \begin{cases}	
P_\alpha(s), & 0 \leq s \leq \bar{s}\\
-x^{-1}\log 2 +P_\alpha'(\bar{s}) \cdot (s - \bar{s}), & s > \bar{s}.
\end{cases}
\]

From this, the unique root of $\bar{P}_\alpha(s)$ can be shown to be $\bar{r} = \bar{s} + x^{-1}(\log 2)[P_\alpha'(\bar{s})]^{-1}$. Since $P_\alpha(s) \geq \bar{P}_\alpha(s)$ for all $s \geq 0$, $\bar{r} \geq r$, the following upper bound for $\bar{r}$ can be obtained for sufficiently small $x$:
\[\bar{r} = (b_px^{-1})^{1/(\alpha-1)} + (\log 2)(\alpha - 1)^{-1} b_p^{-1} \leq 2 (b_p x^{-1})^{1/(\alpha-1)}.\]
Hence:
\begin{align}
\begin{split}
t = s^\alpha \geq 2^\alpha (b_px^{-1})^{\alpha/(\alpha-1)} &\Leftrightarrow s \geq 2 (b_px^{-1})^{1/(\alpha-1)} \; \geq \bar{r} \geq r \\
& \Rightarrow \; s^\alpha - b_px^{-1}s - (\log 2) x^{-1} \geq 0,
\label{eq:cond1}
\end{split}
\end{align}
so setting $t = 2^\alpha (b_px^{-1})^{\alpha/(\alpha-1)} \equiv c_px^{-\alpha/(\alpha-1)}$ satisfies \textbf{[A1]} in \eqref{eq:conds}. 


The next step is to determine the smallest $a_1$ satisfying \textbf{[A2]} in \eqref{eq:conds}, or equivalently, $\frac{1}{2}\sum_{k=1}^\infty \log(1+4\lambda_k t) \geq \sum_{k=1}^\infty \log(1+2\lambda_k t) - \log a_1$. Again, since $\log (1+x) \leq x^q/q$ for $x > 0$ and $0 < q < 1$, a sufficient condition for \textbf{[A2]} is:
\begin{align*}
\log a_1 \geq \sum_{k=1}^\infty \log(1+2\lambda_k t) \Leftarrow \log a_1 \geq \alpha\sum_{k=1}^\infty (2\lambda_k t)^{1/\alpha}
\end{align*}
Plugging in $t =c_px^{-\alpha/(\alpha-1)}$ from \eqref{eq:cond1} and letting $d_{p}\equiv \alpha(2c_{p})^{1/\alpha}\left(\sum_{k=1}^\infty \lambda_k^{1/\alpha}\right)$, we get $\log a_1 \geq d_{p} x^{-1/(\alpha-1)} \Leftrightarrow a_1 \geq \exp\left\{ d_{p} x^{-1/(\alpha-1)} \right\}$.

The choice of $t =c_{p}x^{-\alpha/(\alpha-1)}$ and $a_1 = \exp\left\{ d_{p} x^{-1/(\alpha-1)} \right\}$ therefore \textbf{[A1]} and \textbf{[A2]} in \eqref{eq:cdfw}. It follows from \eqref{eq:cdfw} that:
\begin{equation}
F_{W_{\infty}}(x) \geq (4a_1)^{-1} = \exp\{ - d_{p} x^{-1/(\alpha-1)} \}/4,
\label{eq:cdfrate}
\end{equation}
so $F_{W_{\infty}}(x)$ converges to 0 at a rate of $\mathcal{O}(\exp\left\{ - d_{p} x^{-1/(\alpha-1)} \right\})$ as $x \rightarrow 0^+$.

Consider now the behavior of $\inf\{x: F_{W_n}(x) > 0\}$ as $n \rightarrow \infty$. From the uniform bound in Lemma \ref{lem:conc}, there exists a sequence $(c_{n,p})_{n=1}^\infty, \lim_{n \rightarrow \infty} c_{n,p} = 0$ such that $|F_{W_n}(x) - F_{W_{\infty}}(x)| \leq c_{n,p} n^{-1/2}$ for all $x \geq 0$. Setting the right side of \eqref{eq:cdfrate} equal to $2c_{n,p}n^{-1/2}$ and solving for $x$, we get:
\begin{equation}
x^* = \frac{d_{p}^{\alpha-1}}{[\frac{1}{2}\log n - \log (8c_{n,p})]^{\alpha-1}} \Rightarrow F_{W_{\infty}}(x^*) \geq \exp\left\{- d_{p} (x^*)^{-1/(\alpha-1)} \right\} = 2c_{n,p} n^{-1/2}.
\label{eq:equal}
\end{equation}
so Lemma \ref{lem:conc} ensures the above choice of $x^*$ satisfies $F_{W_n}(x^*) \geq c_{n,p} n^{-1/2} > 0$.

Using this with \eqref{eq:equiv}, it follows that:
\begin{align*}
{E}(F,F_n) = \mathcal{O}\left\{n^{-1}(\log n)^{-(\alpha-1)}\right\},
\end{align*}
with constants depending on $p$. Finally, by Theorem \ref{thm:interr}, we have:
\[I(g;F,F_n) = \mathcal{O}\{\|g\|_{\mathcal{G}_p} {n}^{-1/2}(\log n)^{-(\alpha-1)/2}\}\]
which is as desired.
\end{proof}

The following theorem provides some insight on when the eigenvalue decay condition $\sum_{k=1}^\infty \lambda_k^{1/\alpha} < \infty$ in Theorem \ref{thm:convrate} is satisfied.

\begin{theorem}[Eigenvalue conditions]
Let $F_n$ and $F$ be as in Theorem \ref{thm:convrate}, and let $g \in \mathcal{G}_p$.
\begin{enumerate}[label=(\alph*)]
\item If $\mathcal{X} \subseteq \mathbb{R}^p$ is a bounded Borel set with non-empty interior, then $I(g;F,F_n) = \mathcal{O}\{\|g\|_{\mathcal{G}_p} {n}^{-1/2}(\log n)^{-(1-\nu)/(2p)}\}$ for any $\nu \in (0,1)$,
\item If $\mathcal{X} \subseteq \mathbb{R}^p$ is measurable with positive Lebesgue measure, and there exists some $\beta>0$ and $C\geq0$ such that:
\begin{equation}
\limsup_{r \rightarrow \infty} r^\beta \int_{\mathcal{X} \setminus B_r(\bm{y})} \mathbb{E}\|\bm{x} - \bm{Y}\|_2 \; dF(\bm{x}) \leq C \text{ for all $\bm{y} \in \mathcal{X}$},
\label{eq:moment}
\end{equation}
then $I(g;F,F_n) = \mathcal{O}\{\|g\|_{\mathcal{G}_p} {n}^{-1/2}(\log n)^{-(\gamma-\nu)/(2p)}\}$ for any $\nu \in (0, \gamma)$, where $\gamma = \beta/(\beta+1)$ and $B_r(\bm{y})$ denotes an $r$-ball around $\bm{y}$.
\end{enumerate}
Here, constant terms may depend on $\nu$, $p$ or $\beta$.
\label{thm:compact}
\end{theorem}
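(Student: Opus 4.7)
The plan is to verify, in each setting, the abstract eigenvalue decay condition $\sum_{k=1}^\infty \lambda_k^{1/\alpha} < \infty$ required by Theorem \ref{thm:convrate}, and then invoke that theorem with a specific choice of $\alpha$. Here $\lambda_k$ denotes the $k$-th weighted eigenvalue of the integral operator $T_k f(\cdot) = \int k(\cdot, \bm{y}) f(\bm{y}) \, dF(\bm{y})$ on $L^2(F)$, with $k$ the kernel introduced in Theorem \ref{thm:convrate}. Matching the claimed rates back through Theorem \ref{thm:convrate} amounts to establishing eigenvalue decay like $k^{-(p+1)/p}$ for case (a) and like $k^{-(1+\gamma/p)}$ for case (b).

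For part (a) I would exploit the Sobolev structure of $\mathcal{G}_p$ already identified in the discussion following Theorem \ref{thm:interr}, namely that $\mathcal{G}_p \supseteq W_{(p+1)/2,2}$, with equality when $p$ is odd. On a bounded Borel set with non-empty interior, classical entropy-number estimates for Mercer operators whose reproducing kernel lies in a fractional Sobolev class of smoothness $s$ (Birman--Solomyak, or Edmunds--Triebel) give $\lambda_k = \mathcal{O}(k^{-2s/p})$; with $s = (p+1)/2$ this is $\lambda_k = \mathcal{O}(k^{-(p+1)/p})$. Hence $\sum_k \lambda_k^{1/\alpha} < \infty$ for every $\alpha < 1 + 1/p$, and choosing $\alpha - 1 = (1-\nu)/p$ for arbitrary $\nu \in (0,1)$ and substituting into Theorem \ref{thm:convrate} delivers the claimed rate.

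For part (b) I would handle the unboundedness of $\mathcal{X}$ via a truncation argument indexed by a radius $r$. Fix $\boldsymbol{\psi} \in \mathcal{X}$ and split $k = k_r + (k - k_r)$, where $k_r(\bm{x}, \bm{y}) = k(\bm{x}, \bm{y})\, \mathbf{1}_{B_r(\boldsymbol{\psi})}(\bm{x})\, \mathbf{1}_{B_r(\boldsymbol{\psi})}(\bm{y})$. Using the identity $k(\bm{y}, \bm{y}) = 2\mathbb{E}\|\bm{y} - \bm{Y}\|_2 - \mathbb{E}\|\bm{Y} - \bm{Y}'\|_2$ and applying the tail moment condition \eqref{eq:moment} at $\boldsymbol{\psi}$ yields $\mathrm{tr}(T_k - T_{k_r}) = \mathcal{O}(r^{-\beta})$ for large $r$. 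The restricted operator inherits, from part (a) applied to the bounded set $B_r(\boldsymbol{\psi}) \cap \mathcal{X}$, the decay $\mu_j(T_{k_r}) = \mathcal{O}(j^{-(p+1)/p})$, with the implicit constant tracked in $r$. Combining via the Weyl inequality $\lambda_k(T_k) \leq \mu_{\lceil k/2 \rceil}(T_{k_r}) + \mu_{\lceil k/2 \rceil}(T_k - T_{k_r})$, together with the monotonicity bound $\mu_j(T_k - T_{k_r}) \leq \mathrm{tr}(T_k - T_{k_r})/j$, gives $\lambda_k(T_k) \lesssim k^{-(p+1)/p} + C(r)\, r^{-\beta}/k$. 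Optimizing $r$ as a function of $k$ to balance these two contributions collapses after algebra to the exponent $\gamma = \beta/(\beta+1)$, yielding $\lambda_k = \mathcal{O}(k^{-(1+\gamma/p)})$. Theorem \ref{thm:convrate} with $\alpha - 1 = (\gamma - \nu)/p$ for $\nu \in (0, \gamma)$ then produces the rate in part (b).

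The main obstacle will be making the truncation step in part (b) rigorous. Three issues require care: (i) the Sobolev-based bound for $\mu_j(T_{k_r})$ typically hides geometric constants that can grow with $r$, and these must be tracked explicitly during the optimization over $r$; (ii) the residual operator $T_k - T_{k_r}$ is not automatically positive, so the Weyl inequality may need to be applied to its positive and negative parts separately or replaced by a Ky Fan / Schatten-norm argument; and (iii) the pointwise estimate $\mu_j \leq \mathrm{tr}/j$ is tight only for roughly flat eigenvalue profiles, so a dyadic annular decomposition of $\mathcal{X} \setminus B_r$ combined with iterated Weyl inequalities is likely needed to pin down the precise exponent $\gamma = \beta/(\beta+1)$ rather than a weaker rate.
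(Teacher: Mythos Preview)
Your proposal reaches the same eigenvalue decay rates as the paper but by a considerably more laborious route. The paper's argument is strikingly short: it first verifies directly that the kernel $k(\bm{x},\bm{y}) = \mathbb{E}\|\bm{x}-\bm{Y}\|_2 + \mathbb{E}\|\bm{y}-\bm{Y}\|_2 - \mathbb{E}\|\bm{Y}-\bm{Y}'\|_2 - \|\bm{x}-\bm{y}\|_2$ is Lipschitz with constant $L=2$ (a two-line triangle-inequality computation). For part (a) it then cites K\"uhn (1987, Thm.~4), which gives $\lambda_k = \mathcal{O}\{k^{-(1+1/p)}\}$ for Lipschitz kernels on bounded sets with non-empty interior, and applies Theorem~\ref{thm:convrate} with $\alpha < 1+1/p$. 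For part (b) it observes that the tail condition~\eqref{eq:moment} is precisely a bound on $\int_{\mathcal{X}\setminus B_r(\bm{y})} k(\bm{x},\bm{x})\,dF(\bm{x})$ (since $k(\bm{x},\bm{x}) = 2\mathbb{E}\|\bm{x}-\bm{Y}\|_2 - \text{const.}$), so that $k \in Lip^{1,0}(\mathcal{X},F)$ in the terminology of Ferreira--Menegatto (2009); their Theorem~5.4 then delivers $\lambda_k = \mathcal{O}\{k^{-(1+\gamma/p)}\}$ with $\gamma = \beta/(\beta+1)$ directly.

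Your approach for (a), via the Sobolev identification $\mathcal{G}_p \simeq W_{(p+1)/2,2}$ and Birman--Solomyak/Widom-type entropy bounds, is plausible and lands on the same exponent, but you should be careful that the $\lambda_k$ in Theorem~\ref{thm:convrate} are eigenvalues of $k$ (the $F$-centered kernel) under the measure $F$, not of the reproducing kernel $\tilde{k}$ of $\mathcal{G}_p$ under Lebesgue measure; bridging that gap needs an extra argument. Your approach for (b)---truncate, apply (a) on $B_r$, bound the tail operator, and Weyl-balance---is essentially a from-scratch reproof of the Ferreira--Menegatto result the paper simply cites. The obstacles you list (tracking $r$-dependent constants, non-positivity of the residual, sharpness of $\mu_j \leq \mathrm{tr}/j$) are real and nontrivial; the paper sidesteps all of them by recognizing that condition~\eqref{eq:moment} is exactly the hypothesis of an existing off-the-shelf theorem. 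What your route would buy is self-containment and perhaps insight into why $\gamma = \beta/(\beta+1)$ emerges; what the paper's route buys is a five-line proof.
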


\begin{proof}
See Appendix A.2 of the supplemental article \citep{MJ2017supp}.
\end{proof}

In words, Theorem \ref{thm:compact} demonstrates the improvement of support points over MC under certain conditions on the sample space $\mathcal{X}$ or the desired distribution $F$. Specifically, part (a) requires the sample space $\mathcal{X}$ to be bounded with non-empty interior, whereas part (b) relaxes this boundedness restriction on $\mathcal{X}$ at the cost of the mild moment condition \eqref{eq:moment} on $F$. This condition holds for a large class of distributions which are not too heavy-tailed.

For illustration, consider the standard normal distribution for $F$, with sample space $\mathcal{X} = \mathbb{R}^p$. Note that, when $\|\bm{x}\|_2$ becomes large, $\mathbb{E}\|\bm{x} - \bm{Y}\|_2 \approx \|\bm{x}\|_2$. Hence, the condition in \eqref{eq:moment} becomes:
\[\limsup_{r \rightarrow \infty} r^\beta P(r), \quad P(r) \equiv (2\pi)^{-p/2} \int_{\mathbb{R}^p \setminus B_r(\bm{0})} \|\bm{x}\|_2 \exp\{-\|\bm{x}\|_2^2/2\}\; d\bm{x}.\]
Since $P'(r) \propto - r^p \exp\{-r^2/2\}$, it follows that $P(r) = \mathcal{O}(r^{p-1}\exp\{-r^2/2\})$, so $\limsup_{r\rightarrow \infty} r^\beta P(r) = 0$ for all $\beta > 0$. Applying part (b) of Theorem \ref{thm:compact}, support points enjoy a convergence rate of $\mathcal{O}\{n^{-1/2}(\log n)^{-(1-\nu)/(2p)}\}$ for any $\nu \in (0,1)$ in this case. An analogous argument shows a similar rate holds for any spherically symmetric distribution (see, e.g., \citep{FW1993}) with an exponentially decaying density in its radius.

\subsection{Comparison with MC and existing QMC methods}
\label{sec:conj}

We first discuss the implications of Theorems \ref{thm:convrate} and \ref{thm:compact} in comparison to Monte Carlo. Using the law of iterated logarithms \citep{Kie1961}, one can show that the error convergence rate for MC is bounded a.s. by $\mathcal{O}(n^{-1/2} \sqrt{\log \log n})$ for any distribution $F$. Comparing this with \eqref{eq:intconvrate}, the error rate of support points is asymptotically quicker than MC by at least some log-factor when dimension $p$ is fixed. This improvement is reflected in the simulations in Section \ref{sec:sim}, where support points enjoy a considerable improvement over MC for all point set sizes $n$. {When dimension $p$ is allowed to vary (and assuming $\|g\|_{\mathcal{G}_p}$ and $\text{Var}\{g(\bm{X})\}$, $\bm{X} \sim F$, do not depend on $p$), note that the MC rate is independent of $p$, while the rate in \eqref{eq:intconvrate} can have constants which depend on $p$.} From a theoretical perspective, this suggests support points may be inferior to MC for high-dimensional integration problems. Such a \textit{curse-of-dimensionality}, however, is not observed in our numerical experiments, where support points enjoy a sizable error reduction over MC for $p$ as large as 500.

Compared to existing QMC techniques, the existence rate in Theorem \ref{thm:convrate} falls short in the uniform setting of $F=U[0,1]^p$. For fixed dimension $p$, \citep{FW1993} showed that for any integrand $g$ with bounded variation (in the sense of Hardy and Krause), the error rate for classical QMC point sets is $\mathcal{O}\{n^{-1}(\log n)^p\}$, which is faster than \eqref{eq:intconvrate}. {Moreover, when $p$ is allowed to {vary}, it can be shown (see \citep{KS2005,Dea2013}) that certain {randomized} QMC (RQMC) methods, such as the {randomly-shifted lattice rules} in \citep{Sea2002}, enjoy a root-mean-squared error rate of $\mathcal{O}(n^{-1+\delta})$ with $\delta \in (0,1/2)$, where constant terms do not depend on dimension $p$.} On the other hand, support points provide \textit{optimal} integration points (in the sense of minimum energy) for \textit{non-uniform} distributions at fixed sample size $n$. Because of this optimality, support points can enjoy reduced errors to existing QMC methods in practice, which we demonstrate later for a specific RQMC method called randomly-scrambled Sobol' sequences \citep{Sob1967,Owe1998}. This suggests the rate in Theorem \ref{thm:convrate} may not be tight, and further theoretical work is needed (we outline one possible proof approach in Appendix A.3 of the supplemental article \citep{MJ2017supp}).

%

\section{Generating support points}
\label{sec:alg}
The primary appeal of support points is the efficiency by which these point sets can be optimized, made possible by exploiting the d.c. structure of the energy distance. Here, we present two algorithms, \texttt{sp.ccp} and \texttt{sp.sccp}, which employ a combination of the convex-concave procedure (CCP) with resampling to quickly optimize support points. \texttt{sp.ccp} should be used when sample batches are computationally expensive to obtain from $F$, whereas \texttt{sp.sccp} should be used when samples can be easily obtained. We prove the convergence of both algorithms to a stationary point set, and briefly discuss their running times.

\subsection{Algorithm statements}
We first present the steps for \texttt{sp.ccp}, then introduce \texttt{sp.sccp} as an improvement on \texttt{sp.ccp} when multiple sample batches from $F$ can be efficiently obtained. Suppose a single sample batch $\{\bm{y}_m\}_{m=1}^N$ is obtained from $F$. Using this, \texttt{sp.ccp} optimizes the following {Monte Carlo approximation} of the support points formulation \eqref{eq:spdef}:
\small
\begin{equation}
\argmin_{\bm{x}_1, \cdots, \bm{x}_n} \hat{{E}}(\{\bm{x}_i\};\{\bm{y}_m\}) \equiv \frac{2}{nN} \sum_{i=1}^n \sum_{m=1}^N  \|\bm{y}_m - \bm{x}_i \|_2 - \frac{1}{n^2} \sum_{i=1}^n \sum_{j=1}^n \|\bm{x}_i-\bm{x}_j\|_2.
\tag{MC}
\label{eq:spsamp}
\end{equation}
\normalsize
The approximated objective $\hat{{E}}$ was originally proposed by \citep{SR2004} as a two-sample GOF statistic for testing whether $\{\bm{y}_m\}_{m=1}^N$ and $\{\bm{x}_i\}_{i=1}^n$ are generated from the same distribution. Posed as an optimization problem, however, the goal in \eqref{eq:spsamp} is to {recover} the point set which best represents the random sample $\{\bm{y}_m\}_{m=1}^N$ from $F$ in terms of goodness-of-fit.

The key observation here is that the objective function $\hat{{E}}$ can be written as a difference of convex functions in $\bm{x} = (\bm{x}_1, \cdots, \bm{x}_n)$, namely, the two terms in \eqref{eq:spsamp}. This structure allows for efficient optimization using d.c. programming methods, which enjoy a well-established theoretical and numerical framework \citep{Tuy1995,TA1997}. While global optimization algorithms have been proposed for d.c. programs (e.g., \citep{Tuy1986}), such methods are typically quite slow in practice \citep{LB2014}, and may not be appropriate for the large-scale problem at hand. Instead, we employ a d.c. algorithm called the convex-concave procedure (CCP, see \citep{YR2003}) which, in conjunction with the {distance-based} property of the energy distance, allows for efficient optimization of \eqref{eq:spsamp}.

The main idea in CCP is to first replace the concave term in the d.c. objective with a convex upper bound, then solve the resulting ``surrogate'' formulation (which is convex) using convex programming techniques. This procedure is then repeated until the solution iterates converge. CCP can be seen as a specific case of majorization-minimization (MM, see \citep{Lan2016}), a popular optimization technique in statistics. The key to computational efficiency lies in finding a convex surrogate formulation which can be minimized in closed-form. Here, such a formulation can be obtained by exploiting the distance-based structure of \eqref{eq:spsamp}, with its closed-form minimizer given by the iterative map $\bm{x}_i^{[l+1]} \leftarrow M_i(\{\bm{x}_j^{[l]}\}_{j=1}^n; \{\bm{y}_m\}_{m=1}^N)$, $i=1, \cdots, n$, where $M_i$ is given in \eqref{eq:closedform}. The appeal of CCP here is two-fold. First, the evaluation of the iterative maps $M_i, i = 1, \cdots, n$ requires $\mathcal{O}(n^2p)$ work, thereby allowing for the efficient generation of moderately-sized point sets in moderately-high dimensions. Second, the computation of these maps can be greatly sped up using parallel computing, a point further discussed in Section \ref{sec:rt}.

Algorithm \ref{alg:ccp} outlines the detailed steps for \texttt{sp.ccp} following the above discussion. One caveat for \texttt{sp.ccp} is that it uses only {one} sample batch from $F$, even when multiple sample batches can be generated efficiently. This motivates the second algorithm, \texttt{sp.sccp}, whose steps are outlined in Algorithm \ref{alg:sccp}. The main difference for \texttt{sp.sccp} is that $\{\bm{y}_m\}_{m=1}^N$ is \textit{resampled} within each CCP iteration (a procedure known as {stochastic MM}). This resampling scheme allows \texttt{sp.sccp} to converge to a stationary point set for the {desired} problem \eqref{eq:spdef}, which we demonstrate next.

\begin{algorithm}[t]
\small
\caption{\texttt{sp.ccp}: Support points using one sample batch}
\label{alg:ccp}
\begin{algorithmic}
\stb Sample $\mathcal{D}^{[0]} = \{\bm{x}_i^{[0]}\}_{i=1}^n$ i.i.d. from $\{\bm{y}_m\}_{m=1}^N$.
\stb Set $l = 0$, and \textbf{repeat} until convergence of $\mathcal{D}^{[l]}$:
\bi[leftmargin=30pt]
\item \textbf{For} $i = 1, \cdots, n$ \textbf{do parallel}:
\bi[leftmargin=0pt]
\item Set $\bm{x}_i^{[l+1]} \leftarrow M_i(\mathcal{D}^{[l]}; \{\bm{y}_m\}_{m=1}^N)$, with $M_i$ defined in \eqref{eq:closedform}.
\ei
\item Update $\mathcal{D}^{[l+1]} \leftarrow \{\bm{x}_i^{[l+1]}\}_{i=1}^n$, and set $l \leftarrow l + 1$.
\ei
\stb Return the converged point set $\mathcal{D}^{[\infty]}$.
\end{algorithmic}
\normalsize
\end{algorithm}

\begin{algorithm}[t]
\small
\caption{\texttt{sp.sccp}: Support points using multiple sample batches}
\label{alg:sccp}
\begin{algorithmic}
\stb Sample $\mathcal{D}^{[0]} = \{\bm{x}_i^{[0]}\}_{i=1}^n \distas{i.i.d.} F$, \crd{set $(w_l)_{l=0}^\infty = (np/(np+l))_{l=0}^\infty$, $(\bar{d}_i^{[0]})_{i=1}^n = \bm{0}$}.
\stb Set $l = 0$, and \textbf{repeat} until convergence of $\mathcal{D}^{[l]}$:
\bi[leftmargin=30pt]
\item Resample $\crd{\mathcal{Y}^{[l]} = }\{\bm{y}^{[l]}_m\}_{m=1}^N \distas{i.i.d.} F$.
\item \textbf{For} $i = 1, \cdots, n$ \textbf{do parallel}:
\small
\bi[leftmargin=0pt]
\item Set $\bm{x}_i^{[l+1]} \leftarrow \crd{(1-\kappa_l) \bm{x}_i^{[l]} + \kappa_l} M_i(\mathcal{D}^{[l]}; \mathcal{Y}^{[l]})$, with $M_i$ in \eqref{eq:closedform}, where $\crd{\kappa_l = w_l q(\bm{x}_i^{[l]}; \mathcal{Y}^{[l]})/[{w_l q(\bm{x}_i^{[l]}; \mathcal{Y}^{[l]}) + (1-w_l) \bar{d}_i^{[l]}}]}$.
\item \crd{Set $\bar{d}_i^{[l+1]} \leftarrow (1-w_l) \bar{d}_i^{[l]} + w_l q(\bm{x}_i^{[l]}; \mathcal{Y}^{[l]})$}.
\ei
\normalsize
\item Update $\mathcal{D}^{[l+1]} \leftarrow \{\bm{x}_i^{[l+1]}\}_{i=1}^n$, and set $l \leftarrow l + 1$.
\ei
\stb Return the converged point set $\mathcal{D}^{[\infty]}$.
\end{algorithmic}
\normalsize
\end{algorithm}

\subsection{Algorithmic convergence}
For completeness, a brief overview of MM is provided, following \cite{Lan2016}.

\begin{definition}[Majorization function]
Let $f:\mathbb{R}^s \rightarrow \mathbb{R}$ be the objective function to be minimized. A function $h(\bm{z}|\bm{z}')$ \textup{majorizes} $f(\bm{z})$ at a point $\bm{z}' \in \mathbb{R}^s$ if $h(\bm{z}|\bm{z}') \geq f(\bm{z})$, with equality holding when $\bm{z} = \bm{z}'$.
\end{definition}
\noindent Starting at an initial point $\bm{z}^{[0]}$, the goal in MM is to minimize the majorizing function $h$ as a surrogate for the true objective $f$, and iterate the updates $\bm{z}^{[l+1]} \leftarrow \argmin_{\bm{z}} h(\bm{z}|\bm{z}^{[l]})$ until convergence. This iterative procedure has the so-called {descent property} $f(\bm{x}^{[l+1]}) \leq f(\bm{x}^{[l]})$, which ensures solution iterates are always decreasing in $f$. The key for efficiency is to find a majorizing function $g$ with a closed-form minimizer which is easy to compute.

Consider now the Monte Carlo approximation in \eqref{eq:spsamp}, which has a d.c. formulation in $\{\bm{x}_i\}_{i=1}^n$, with concave term $-n^{-2} \sum_{i=1}^n \sum_{j=1}^n \|\bm{x}_i - \bm{x}_j\|_2$. Following CCP, we first majorize this term using a first-order Taylor expansion at the current iterate $\{\bm{x}_j'\}_{j=1}^n$, yielding the surrogate convex program:
\begin{align}
\begin{split}
\small
&\argmin_{\bm{x}_1, \cdots, \bm{x}_n} h(\{\bm{x}_i\}_{i=1}^n; \{\bm{x}'_j\}_{j=1}^n) \\
& \equiv \frac{2}{nN} \sum_{i=1}^n \sum_{m=1}^N \|\bm{y}_m- \bm{x}_i\|_2 - \frac{1}{n^2} \left[ \sum_{i=1}^n \sum_{j=1}^n \left(\|\bm{x}_i' - \bm{x}_j'\|_2 + \frac{2(\bm{x}_i-\bm{x}_i')^T(\bm{x}_i' - \bm{x}_j')}{\|\bm{x}_i' - \bm{x}_j'\|_2} \right) \right].
\label{eq:convex}
\end{split}
\end{align}
Implicit here is the assumption that the current point set is pairwise distinct, i.e., $\bm{x}_i' \neq \bm{x}_j'$ for all $i,j = 1, \cdots, n$. From simulations, this appears to be always satisfied by initializing the algorithm with a pairwise distinct point set, because the random sampling of $\{\bm{y}_m\}$ and the ``almost-random'' round-off errors \citep{AS2011} in the evaluation of $M_i$ force subsequent point sets to be pairwise distinct. Such an assumption can also be easily checked after each iteration.

While \eqref{eq:convex} can be solved using gradient-based convex programming techniques, this can be computationally burdensome when $n$ or $p$ becomes large, because such methods may require many evaluations of $h$ and its subgradient. Instead, the following lemma allows us to perform a slight ``convexification'' of the convex term in \eqref{eq:convex}, which then yields a efficient closed-form minimizer.
\begin{lemma}[Convexification]
$Q(\bm{x}|\bm{x}') = \frac{\|\bm{x}\|_2^2}{2\|\bm{x}'\|_2} + \frac{\|\bm{x}'\|_2}{2}$ majorizes $\| \bm{x} \|_2$ at $\bm{x}'$ for any $\bm{x}' \in \mathbb{R}^p$.
\label{lem:maj}
\end{lemma}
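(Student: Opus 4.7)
The plan is to verify the two defining conditions of a majorization function directly: namely (i) the tangency condition $Q(\bm{x}'|\bm{x}') = \|\bm{x}'\|_2$, and (ii) the domination condition $Q(\bm{x}|\bm{x}') \geq \|\bm{x}\|_2$ for all $\bm{x} \in \mathbb{R}^p$. Since $Q$ is only defined when $\bm{x}' \neq \bm{0}$, I would first note that this assumption holds in the intended application because the surrogate in \eqref{eq:convex} is expanded about iterates satisfying $\bm{x}_i' \neq \bm{x}_j'$, so the relevant differences $\bm{x}_i'-\bm{x}_j'$ at which Lemma \ref{lem:maj} is applied are all nonzero.

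Condition (i) is a one-line plug-in: substituting $\bm{x} = \bm{x}'$ gives $Q(\bm{x}'|\bm{x}') = \|\bm{x}'\|_2^2/(2\|\bm{x}'\|_2) + \|\bm{x}'\|_2/2 = \|\bm{x}'\|_2$.

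For condition (ii), I would reduce the inequality to a scalar AM-GM statement. Setting $a = \|\bm{x}\|_2 \geq 0$ and $b = \|\bm{x}'\|_2 > 0$, the claim $Q(\bm{x}|\bm{x}') \geq \|\bm{x}\|_2$ becomes $a^2/(2b) + b/2 \geq a$, which after multiplying by $2b > 0$ is equivalent to $a^2 + b^2 \geq 2ab$, i.e., $(a-b)^2 \geq 0$. This is obvious and is an equality precisely when $a = b$, i.e., when $\|\bm{x}\|_2 = \|\bm{x}'\|_2$, consistent with the tangency at $\bm{x}=\bm{x}'$.

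There is no real obstacle; the lemma is a routine AM-GM style majorization. The only subtlety worth flagging explicitly in the write-up is the domain restriction $\bm{x}' \neq \bm{0}$, together with a remark that $Q(\cdot|\bm{x}')$ is a strictly convex quadratic in $\bm{x}$ (since its Hessian is $\|\bm{x}'\|_2^{-1} I \succ 0$), which is precisely the feature needed later for the closed-form minimizer $M_i$ in \eqref{eq:closedform}.
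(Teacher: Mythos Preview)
Your proposal is correct and matches the paper's proof essentially line for line: the paper also checks tangency by direct substitution and then obtains the domination inequality from $(\|\bm{x}\|_2 - \|\bm{x}'\|_2)^2 \geq 0$, i.e., $\|\bm{x}\|_2^2 + \|\bm{x}'\|_2^2 \geq 2\|\bm{x}\|_2\|\bm{x}'\|_2$, divided through by $2\|\bm{x}'\|_2$. Your added remarks on the implicit assumption $\bm{x}' \neq \bm{0}$ and the strict convexity of $Q(\cdot|\bm{x}')$ are fine contextual notes but not part of the paper's proof itself.
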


\begin{proof}
See Appendix A.4 of the supplemental article \citep{MJ2017supp}.
\end{proof}

Lemma \ref{lem:maj} has an appealing geometric interpretation. Viewing $\|\bm{x}\|_2$ as a second-order cone centered at $\bm{0}$, $Q(\bm{x}|\bm{x}')$ can be interpreted as the tightest convex paraboloid intersecting this cone at $\bm{x}'$. Note that the quadratic nature of the majorizer $Q$, which is crucial for deriving a closed-form minimizer, is made possible by the distance-based structure of the energy distance.

From this, the following lemma provides a quadratic majorizer for \eqref{eq:convex}, along with its corresponding closed-form minimizer:
\begin{lemma}[Closed-form iterations]
Define the function $h^Q$ as:
\small
\begin{align*}
h^Q(\{\bm{x}_i\}_{i=1}^n; \{\bm{x}'_j\}_{j=1}^n) &\equiv \frac{2}{nN} \sum_{i=1}^n \sum_{m=1}^N \left\{ \frac{\|\bm{y}_m- \bm{x}_i\|_2^2}{2\|\bm{y}_m-\bm{x}_i'\|_2} + \frac{\|\bm{y}_m-\bm{x}_i'\|_2}{2} \right\} \\
& - \frac{1}{n^2} \left[ \sum_{i=1}^n \sum_{j=1}^n \left(\|\bm{x}_i' - \bm{x}_j'\|_2 + \frac{2(\bm{x}_i-\bm{x}_i')^T(\bm{x}_i' - \bm{x}_j')}{\|\bm{x}_i' - \bm{x}_j'\|_2} \right) \right],
\end{align*}
\normalsize
Then $h^Q(\cdot;\{\bm{x}_j'\}_{j=1}^n)$ majorizes $\hat{E}$ at $\{\bm{x}_j'\}_{j=1}^n$. Moreover, the global minimizer of $h^Q(\cdot;\{\bm{x}_j'\}_{j=1}^n)$ is given by:
\small
\begin{align}
\begin{split}
\bm{x}_i &= M_i(\{\bm{x}_j'\}_{j=1}^n; \{\bm{y}_m\}_{m=1}^N)\\
& \equiv \crd{q^{-1}(\bm{x}_i'; \{\bm{y}_m\}_{m=1}^N)} \left( \frac{N}{n} \sum_{\substack{j=1\\ j\neq i}}^n \frac{\bm{x}_i'-\bm{x}_j'}{\|\bm{x}_i'-\bm{x}_j'\|_2} + \sum_{m=1}^N \frac{\bm{y}_m}{\|\bm{x}_i'-\bm{y}_m\|_2} \right), \; i = 1, \cdots, n,
\label{eq:closedform}
\end{split}
\end{align}
\normalsize
where \crd{$q(\bm{x}_i; \{\bm{y}_m\}_{m=1}^N) \equiv \left(\sum_{m=1}^N \|\bm{x}_i - \bm{y}_m\|_2^{-1} \right)$}.
\label{lem:quad}
\end{lemma}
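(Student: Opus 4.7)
The plan is to establish the two claims of Lemma \ref{lem:quad} in sequence: first verify that $h^Q(\cdot;\{\bm{x}_j'\}_{j=1}^n)$ majorizes $\hat{E}$ at $\{\bm{x}_j'\}_{j=1}^n$, then exploit the separable convex-quadratic structure of $h^Q$ in each $\bm{x}_i$ to obtain the closed-form minimizer $M_i$ in \eqref{eq:closedform}. Both steps rely on the assumption (stated in the preceding text) that $\{\bm{x}_j'\}_{j=1}^n$ is pairwise distinct, so that the subdifferentials of the Euclidean norms appearing below are singletons.

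For the majorization, I would handle the two components of $\hat{E}$ separately. The convex ``attraction'' term $\frac{2}{nN}\sum_{i,m}\|\bm{y}_m - \bm{x}_i\|_2$ is upper-bounded termwise by applying Lemma \ref{lem:maj} with $\bm{x} \leftarrow \bm{y}_m - \bm{x}_i$ and $\bm{x}' \leftarrow \bm{y}_m - \bm{x}_i'$, which produces exactly the first double sum inside $h^Q$. For the concave ``repulsion'' term $-\frac{1}{n^2}\sum_{i,j}\|\bm{x}_i - \bm{x}_j\|_2$, I would use the standard first-order linear upper bound for concave functions at $(\bm{x}_i',\bm{x}_j')$, then perform an $i \leftrightarrow j$ symmetrization to combine the gradient contributions in $\bm{x}_i$ and $\bm{x}_j$, which yields the factor of $2$ and recovers the second sum inside $h^Q$. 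Equality at $\{\bm{x}_i\} = \{\bm{x}_i'\}$ holds for each bound individually, so it holds for their sum.

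For the closed-form minimizer, the central observation is that $h^Q$ is \emph{separable} in $\bm{x}_1,\ldots,\bm{x}_n$: each $\bm{x}_i$ appears only in its own block of the first sum (as a strictly convex quadratic) and as a linear term inside the linearized second sum. The Hessian of $h^Q$ with respect to each $\bm{x}_i$ is $\bigl(\frac{2}{nN}\sum_m \|\bm{y}_m - \bm{x}_i'\|_2^{-1}\bigr) I_p$, which is positive definite, so $h^Q$ admits a unique global minimizer obtained blockwise by setting $\nabla_{\bm{x}_i} h^Q = \bm{0}$ for each $i$. Computing this gradient gives a linear equation of the form $\alpha_i \bm{x}_i = \bm{b}_i$ with $\alpha_i = \frac{2}{nN}\sum_m \|\bm{y}_m - \bm{x}_i'\|_2^{-1} > 0$; solving and multiplying through yields exactly the expression for $M_i$ in \eqref{eq:closedform}, with the $N/n$ weighting on the repulsion sum arising from cancelling the factor $\frac{2}{nN}$ against $\frac{2}{n^2}$.

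The main obstacle is purely bookkeeping in the symmetrization step for the concave linearization: one must carefully track that the cross-product $(\bm{x}_i - \bm{x}_i')^T(\bm{x}_i' - \bm{x}_j')/\|\bm{x}_i' - \bm{x}_j'\|_2$ absorbs both the $(i,j)$ and $(j,i)$ contributions of the first-order expansion. Nothing beyond Lemma \ref{lem:maj}, elementary convex analysis, and unconstrained quadratic minimization is required; the proof is therefore short, and its novelty lies in having engineered the majorizer $h^Q$ to admit a blockwise closed-form solution, a direct consequence of the distance-based structure of the energy distance $\hat{E}$.
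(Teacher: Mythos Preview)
Your proposal is correct and follows essentially the same approach as the paper's proof: the majorization is obtained by combining the first-order linearization of the concave repulsion term (already carried out in the definition of $h$) with Lemma~\ref{lem:maj} applied termwise to the convex attraction sum, and the minimizer is then obtained by setting $\nabla_{\bm{x}_i} h^Q = \bm{0}$ and solving the resulting separable quadratic system. The paper's own proof is extremely terse (two sentences), so your added detail on the $i\leftrightarrow j$ symmetrization and the blockwise Hessian computation is welcome elaboration rather than a departure in method.
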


\begin{proof}
See Appendix A.5 of the supplemental article \citep{MJ2017supp}.
\end{proof}

One can now prove the convergence of \texttt{sp.ccp} and \texttt{sp.sccp}.

\begin{theorem} \emph{(Convergence - \texttt{sp.ccp})} Assume $\mathcal{X}$ is closed and convex. For any pairwise distinct $\mathcal{D}^{[0]} \subseteq \mathcal{X}$ and fixed sample batch $\{\bm{y}_m\}_{m=1}^ N \subseteq \mathcal{X}$, the sequence $(\mathcal{D}^{[l]})_{l=1}^\infty$ in Algorithm \ref{alg:ccp} converges to a limiting point set $\mathcal{D}^{[\infty]}$ which is stationary for $\hat{E}$.
\label{thm:conv1}
\end{theorem}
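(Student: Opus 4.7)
The plan is to cast Algorithm~\ref{alg:ccp} as a majorization--minimization (MM) iteration and invoke a standard fixed-point argument. By Lemma~\ref{lem:quad}, the quadratic surrogate $h^Q(\,\cdot\,;\mathcal{D}^{[l]})$ majorizes $\hat{E}$ at $\mathcal{D}^{[l]}$ and has unique global minimizer $M(\mathcal{D}^{[l]}) := (M_1(\mathcal{D}^{[l]};\cdot),\ldots,M_n(\mathcal{D}^{[l]};\cdot))$ as in \eqref{eq:closedform}. This immediately yields the descent chain
\[
\hat{E}(\mathcal{D}^{[l+1]}) \;\leq\; h^Q(\mathcal{D}^{[l+1]};\mathcal{D}^{[l]}) \;\leq\; h^Q(\mathcal{D}^{[l]};\mathcal{D}^{[l]}) \;=\; \hat{E}(\mathcal{D}^{[l]}),
\]
so $\hat{E}(\mathcal{D}^{[l]})$ is monotone non-increasing in $l$. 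It is also bounded below because $\hat{E}$ equals the Sz\'ekely--Rizzo two-sample energy statistic (which is non-negative) plus the constant $N^{-2}\sum_{m,m'}\|\bm{y}_m-\bm{y}_{m'}\|_2$. Hence the objective-value sequence converges to some limit $\hat{E}^{\infty}$.

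Next I would establish pre-compactness of the iterates. A direct triangle-inequality computation shows that $\hat{E}$ is coercive coordinate-wise: fixing all $\bm{x}_j$ with $j\neq i$ and letting $\|\bm{x}_i\|_2 \to \infty$, the first sum of \eqref{eq:spsamp} grows at asymptotic rate $2/n$ in $\|\bm{x}_i\|_2$, while the subtracted double-sum grows at rate $2(n-1)/n^2 < 2/n$, so $\hat{E}(\mathcal{D}) \to \infty$. Since $\mathcal{X}$ is closed, the sublevel set $\{\mathcal{D}\in\mathcal{X}^n : \hat{E}(\mathcal{D})\leq \hat{E}(\mathcal{D}^{[0]})\}$ is compact, and the descent property traps the iterates inside it. For any accumulation point $\mathcal{D}^{[\infty]}$ of a subsequence $(\mathcal{D}^{[l_k]})$, continuity of $M$ on the open set of pairwise-distinct configurations gives $\mathcal{D}^{[l_k+1]} \to M(\mathcal{D}^{[\infty]})$. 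Passing to the limit in the descent chain forces $h^Q(M(\mathcal{D}^{[\infty]});\mathcal{D}^{[\infty]}) = h^Q(\mathcal{D}^{[\infty]};\mathcal{D}^{[\infty]})$, and uniqueness of the minimizer gives $M(\mathcal{D}^{[\infty]}) = \mathcal{D}^{[\infty]}$. Fixed points of $M$ correspond to stationary points of $\hat{E}$ by the standard tangency identity $\nabla h^Q(\mathcal{D}^{[\infty]};\mathcal{D}^{[\infty]}) = \nabla \hat{E}(\mathcal{D}^{[\infty]})$ in MM analysis \citep{Lan2016}, completing the argument.

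The main obstacle lies in justifying the pairwise-distinctness property along the entire trajectory, since the update $M_i$ in \eqref{eq:closedform} and the majorizer $h^Q$ both contain $1/\|\bm{x}_i'-\bm{x}_j'\|_2$ and $1/\|\bm{x}_i'-\bm{y}_m\|_2$ terms that are undefined when two iterate points coincide or when an iterate coincides with a sample $\bm{y}_m$. A fully rigorous treatment would either (i) preserve pairwise-distinctness of $\mathcal{D}^{[l]}$ and disjointness from $\{\bm{y}_m\}_{m=1}^N$ as an inductive hypothesis that can be checked numerically after each step, as suggested by the paper, or (ii) regularize the Euclidean norms by an $\varepsilon$-smoothing, establish the analogous convergence result for each $\varepsilon>0$, and pass to the limit $\varepsilon \downarrow 0$. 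Given the theorem's preamble that $\mathcal{D}^{[0]}$ is pairwise distinct, route (i) appears intended.
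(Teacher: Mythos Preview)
Your proposal is correct and follows the same MM-based strategy as the paper, but the execution differs. The paper's proof simply verifies that the surrogate $h^Q$ from Lemma~\ref{lem:quad} satisfies the regularity assumptions (A1)--(A4) of Theorem~1 in \cite{Rea2013} (majorization, tangency of directional derivatives, continuity) and then invokes that external result directly. You instead reprove the content of that theorem by hand: descent, boundedness below, coercivity of $\hat{E}$ to trap the iterates in a compact sublevel set, and a fixed-point argument via continuity of $M$ and the tangency identity. Your route is more self-contained and makes explicit the coercivity step (the $2/n$ vs.\ $2(n-1)/n^2$ growth comparison), which the paper leaves implicit in the cited hypotheses; the paper's route is shorter but depends on the reader consulting \cite{Rea2013}.

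Two minor remarks. First, your argument as written establishes that every \emph{accumulation point} is a fixed point of $M$ and hence stationary for $\hat{E}$; upgrading this to full sequential convergence of $(\mathcal{D}^{[l]})$ to a single limit requires an additional step (e.g., discreteness of the stationary set or a Kurdyka--\L ojasiewicz-type argument). The paper's cited Theorem~1 of \cite{Rea2013} in fact only guarantees the limit-point statement as well, so you are matching the paper's actual level of rigor here. Second, your handling of the pairwise-distinctness caveat via route~(i) is exactly what the paper does: it is stated as an assumption to be checked iteration-by-iteration rather than proved a priori.
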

\begin{proof}
See Appendix A.6 of the supplemental article \citep{MJ2017supp}.
\end{proof}

\begin{theorem} \emph{(Convergence - \texttt{sp.sccp})} Assume $\mathcal{X}$ is compact and convex. For any pairwise distinct $\mathcal{D}^{[0]} \subseteq \mathcal{X}$, \crd{all limiting point sets $\mathcal{D}^{[\infty]}$ (there exists at least one) of the sequence $(\mathcal{D}^{[l]})_{l=1}^\infty$ in Algorithm \ref{alg:sccp} are stationary for $E$.}
\label{thm:conv2}
\end{theorem}
\begin{proof}
See Appendix A.7 of the supplemental article \citep{MJ2017supp}.
\end{proof}

\noindent (Recall that $\bm{z} \in D$ is a stationary solution for a function $f: D \subseteq \mathbb{R}^s \rightarrow \mathbb{R}$ if:
\[f'(\bm{z},\bm{d}) \geq 0 \quad \text{ for all } \bm{d} \in \mathbb{R}^s \; \text{s.t.} \; \bm{z} + \bm{d} \in D,\]
where $f'(\bm{z},\bm{d})$ is the directional derivative of $f$ at $\bm{z}$ in direction $\bm{d}$.) Note that the compactness condition on $\mathcal{X}$ in Theorem \ref{thm:conv2} is needed to prove the convergence of stochastic MM algorithms, since it allows for an application of the law of large numbers (see \cite{Mai2013} for details).


\subsection{Running time and parallelization}
\label{sec:rt}
Regarding the running time of \texttt{sp.ccp}, it is well known that MM algorithms enjoy a linear error convergence rate \citep{OR1970}. This means $L = \mathcal{O}(\log \delta^{-1})$ iterations of \eqref{eq:closedform} are sufficient for achieving an objective gap of $\delta > 0$ from the stationary solution. Since the maps in \eqref{eq:closedform} require $\mathcal{O}\{ n (n + N) p\}$ work to compute, the running time of \texttt{sp.ccp} is $\mathcal{O}\{n (n + N) p \log \delta^{-1}\}$. Assuming the batch sample size $N$ does not increase with $n$ or $p$, this time reduces to $\mathcal{O}(n^2p \log \delta^{-1})$, which suggests the proposed algorithm can efficiently generate moderately-sized point sets in moderately-high dimensions, but may be computationally burdensome for large point sets. While a similar linear error convergence is difficult to establish for \texttt{sp.sccp} due to its stochastic nature (see \citep{BB2008, GL2013}), its running time is quite similar to \texttt{sp.ccp} from simulations.

The separable form of \eqref{eq:closedform} also allows for further computational speed ups using parallel processing. As outlined in Algorithms \ref{alg:ccp} and \ref{alg:sccp}, the iterative map for each point $\bm{x}_i$ can be computed in parallel using separate processing cores. Letting $P$ be the total number of computation cores available, such a parallelization scheme reduces the running time of \texttt{sp.ccp} and \texttt{sp.sccp} to $\mathcal{O}( \lceil n/P \rceil n p \log \delta^{-1})$, thereby allowing for quicker optimization of large point sets. This feature is particularly valuable given the increasing availability of multi-core processors in personal laptops and computing clusters. 

\section{Simulations}
\label{sec:sim}
Several simulations are presented here which demonstrate the effectiveness of support points in practice. We first discuss the space-filling property of support points, then comment on its computation time using \texttt{sp.sccp}. Finally, we compare the integration performance of support points with MC and a RQMC method called IT-RSS (defined later).

\subsection{Visualization and timing}
\label{sec:vis}
\begin{figure}[t]
\centering
\includegraphics[width=0.9\textwidth]{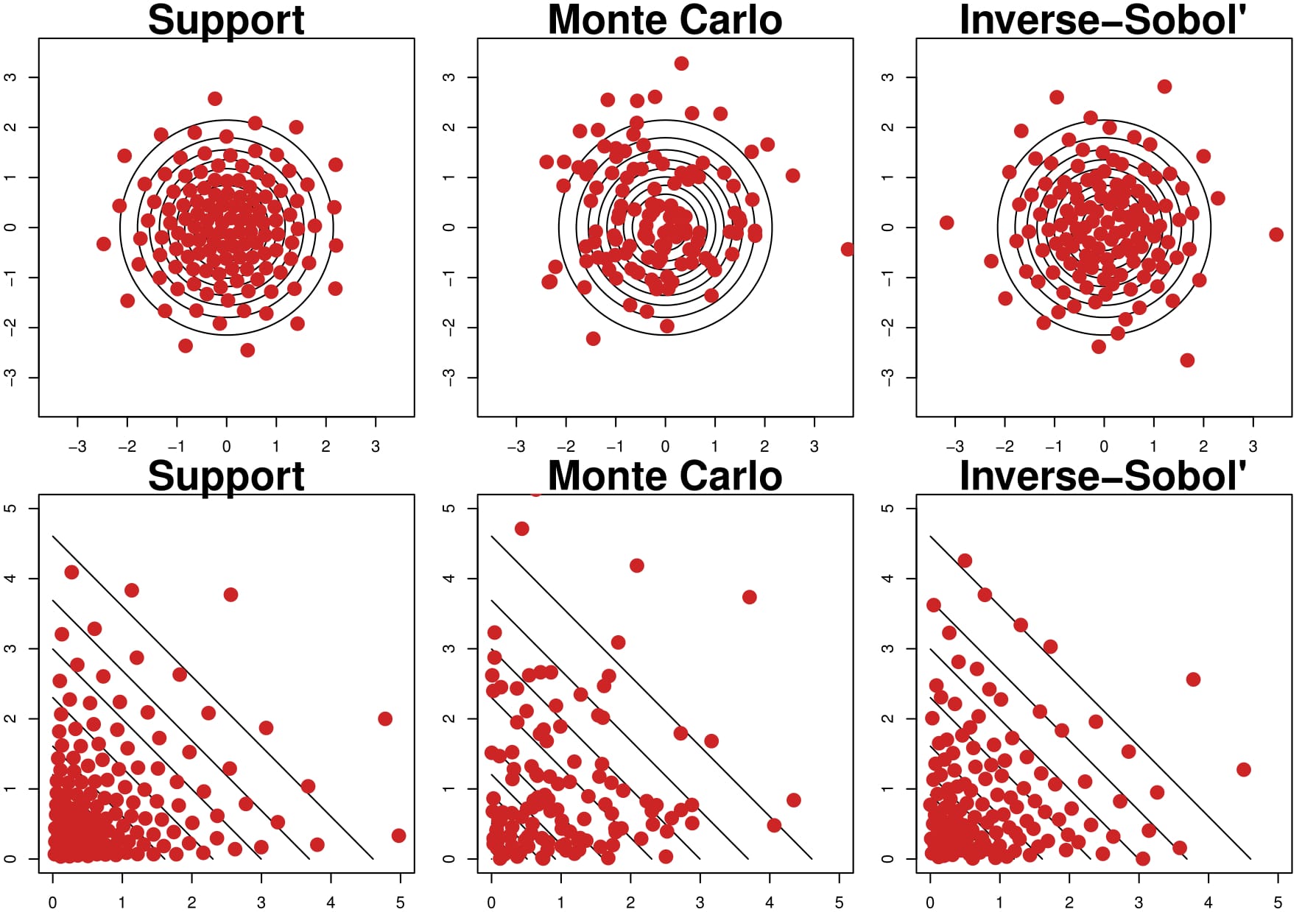}
\caption{{$n=128$} support points, MC points and inverse Sobol' points for i.i.d. $N(0,1)$ and $\text{Exp}(1)$ in $p=2$ dimensions. Lines represent density contours.}
\label{fig:visual}
\end{figure}

For visualization, Figure \ref{fig:visual} shows the $n=128$-point point sets for the i.i.d. $N(0,1)$ and $Exp(1)$ distributions in $p=2$ dimensions, with lines outlining density contours (additional visualizations provided in Appendix B of the supplemental article \citep{MJ2017supp}). Support points are plotted on the left, Monte Carlo samples in the middle and inverse Sobol' points on the right. {The latter is generated by choosing the Sobol' points on $U[0,1]^2$ which maximize the minimum interpoint distance over 10,000 random scramblings (see next section for details), then performing an inverse-transform of $F$ on such a point set.} From this figure, {support points appear to be slightly more visually representative of the underlying distribution $F$ than the inverse Sobol' points, and much more representative than MC.} Specifically, the proposed point set is concentrated in regions with high density, but each point is sufficiently spaced out from one another to maximize their representative power. Borrowing a term from design-of-experiments literature \cite{Sea2013}, we call point sets with these two properties to be \textit{space-filling} on $F$. A key reason for this space-fillingness is the {distance-based} property of the energy distance: the two terms for $E(F,F_n)$ in \eqref{eq:mvl2samp} force support points to not only mimic the desired distribution $F$, but also ensure no two points are too close together. This allows for a more appealing visual representation of $F$, and can provide more robust integration performance.

\begin{figure}[t]
\centering
\includegraphics[width=0.9\textwidth]{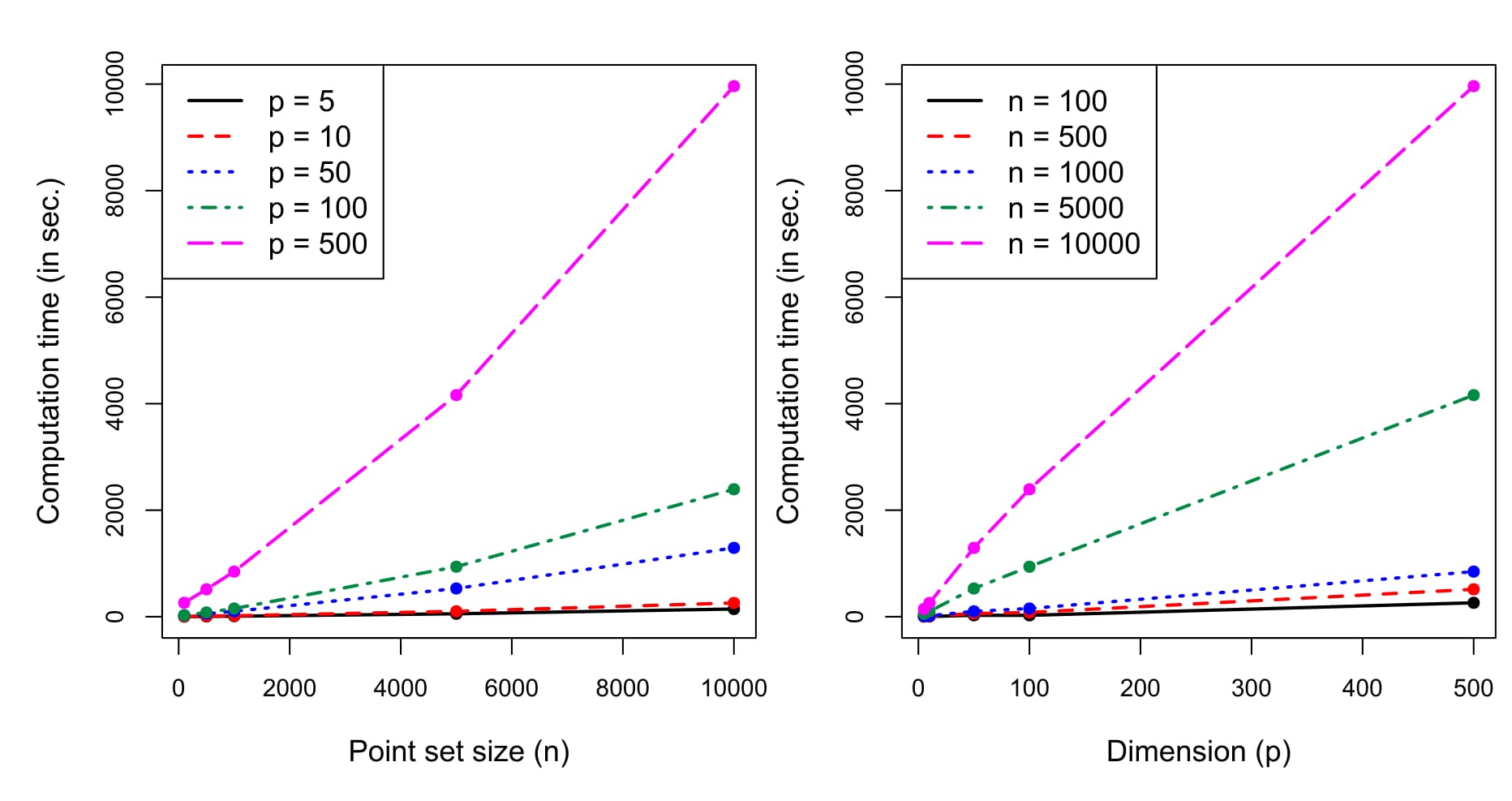}
\vspace{-0.4cm}
\caption{Computation time (in seconds) of \texttt{\textup{sp.sccp}} as a function of point set size ($n$) and dimension ($p$) for the i.i.d. $Beta(2,4)$ distribution.}
\label{fig:times}
\end{figure}

Regarding computation time, Figure \ref{fig:times} shows the times (in seconds) needed for \texttt{sp.sccp} to generate support points for the i.i.d. $Beta(2,4)$ distribution, first as a function of point set size $n$ with fixed dimension $p$, then as a function of $p$ with fixed $n$. The resampling size is fixed at $N = 10,000$ for all choices of $n$ and $p$. Similar times are reported for other distributions, and are not reported for brevity. All computations are performed on a 12-core Intel Xeon 3.50 Ghz processor. From this figure, two interesting observations can be made. First, for fixed $n$, these plots show that the empirical running times grow quite linearly in $p$, whereas for fixed $p$, these running times exhibit a slow quadratic (but almost linear) growth in $n$. This provides evidence for the $\mathcal{O}(n^2p)$ running time asserted in Section \ref{sec:rt}. Second, as a result of this running time, support points can be generated efficiently for moderate-sized point sets in moderately-high dimensions. For $p=2$, the required times for generating $n = 50 - 10,000$ points range from $3$ seconds to $2$ minutes; for $p=50$, $27$ seconds to $20$ minutes; and for $p=500$, $4$ minutes to $2.5$ hours. While these times are quite fast from an optimization perspective, they are still slower than number-theoretic QMC methods, which can generate, say, $n=10^6$ points in $p=10^3$ dimensions in a matter of seconds. {The appeal for support points is that, by exploiting the d.c. structure of the energy distance in \cite{SR2004}, one obtains for any distribution (locally) minimum energy sampling points which can outperform number-theoretic QMC methods.}

\subsection{Numerical integration}
\label{sec:integ}
We now investigate the integration performance of support points in comparison with Monte Carlo and an RQMC method called the inverse-transformed randomized Sobol' sequences (IT-RSS). {The former is implemented using the Mersenne twister \citep{MN1998}, the default pseudo-random number generator in the software R \citep{R2017}. The latter is obtained by (a) generating a randomized Sobol' sequence using the R package \texttt{randtoolbox} \citep{DS2013} (which employs Owen-style scrambling \citep{Owe1998} with Sobol' sequences generated in the implementation of \citep{JK2003}), and (b) performing the inverse-transform of $F$ on the resulting point set.} As mentioned in Section \ref{sec:sp}, IT-RSS performs well in the uniform setting $F=U[0,1]^p$, and provides a good benchmark for comparing support points with existing QMC methods.

The simulation set-up is as follows. Support points are generated using \texttt{sp.sccp}, with point set sizes ranging from $n=50$ to $10,000$ and resampling size $N$ fixed at $10,000$. Since MC and IT-RSS are randomized methods, we replicate both for 100 trials to provide an estimate of error variability, with replications seeded for reproducibility. Three distributions are considered for $F$: the i.i.d. $N(0,1)$, the i.i.d. $Exp(1)$ and the i.i.d. $Beta(2,4)$ distributions, with $p$ ranging from $5$ to $500$. For the integrand $g$, two (modified) test functions are taken from \cite{Gen1984}: the Gaussian peak function (GAPK): $g(\bm{x}) = \exp\left\{- \sum_{l=1}^p \alpha_l^2 (x_l - u_l)^2\right\}$ and the (modified) oscillatory function (OSC): $g(\bm{x}) = \exp\{-\sum_{l=1}^p \beta_l x_l^2 \}\cos\left(2\pi u_1 + \sum_{l=1}^p \beta_l x_l\right)$. Here, $\bm{x} = (x_l)_{l=1}^p$, $u_l$ is the marginal mean for the $l$-th dimension of $F$, and the scale parameters $\alpha_l$ and $\beta_l$ are set as $20/p$ and $5/p$, respectively.

\begin{figure}[t]
\centering
\includegraphics[width=0.9\textwidth]{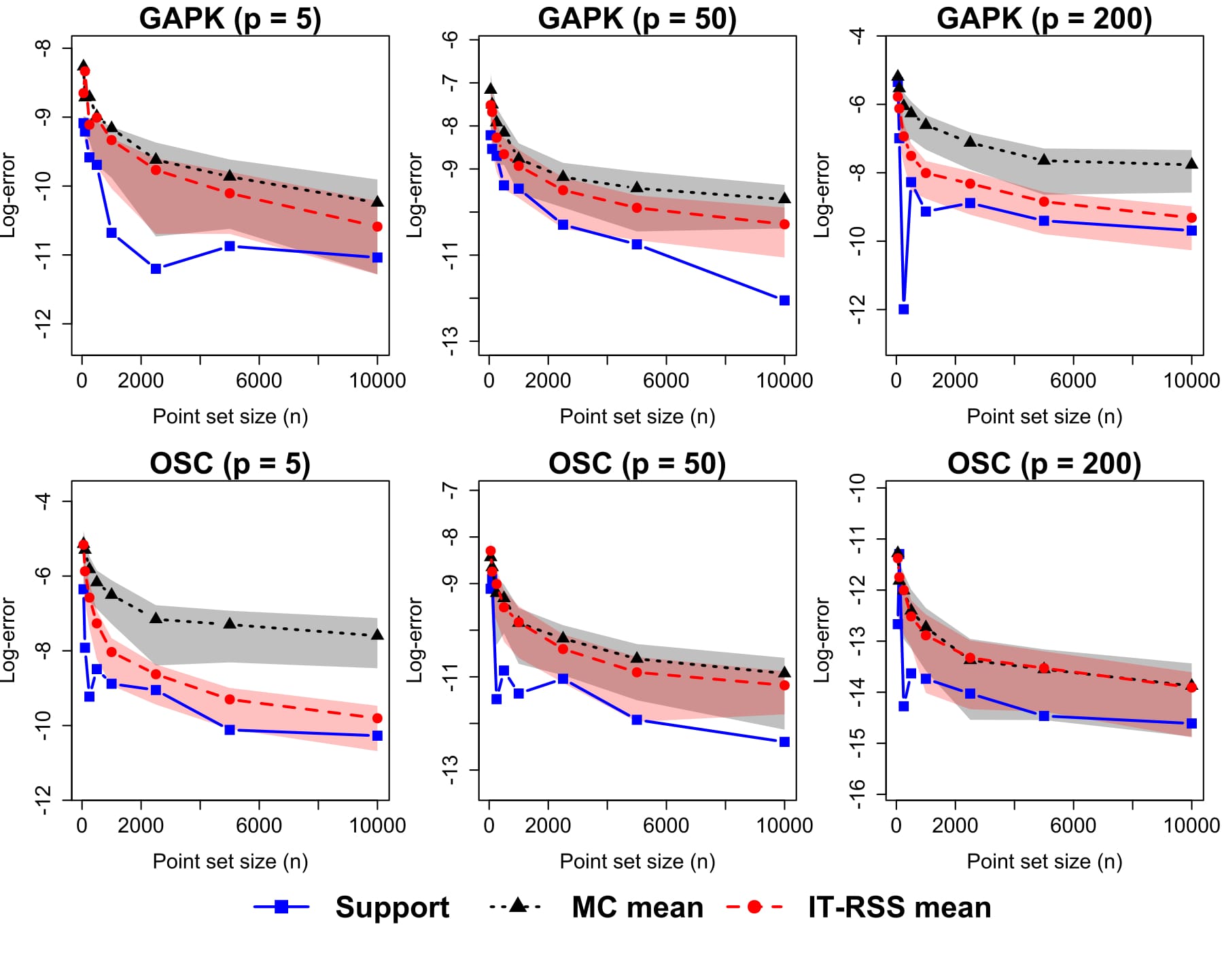}
\vspace{-0.5cm}
\caption{Log-absolute errors for GAPK under the i.i.d. $Exp(1)$ distribution (top) and for OSC under the i.i.d. $N(0,1)$ distribution (bottom). Lines denote log average-errors, and shaded bands mark the 25-th and 75-th quantiles.}
\label{fig:intmn}
\end{figure}

Figure \ref{fig:intmn} shows the resulting log-absolute errors in $p=5$, $50$ and $200$ dimensions for GAPK under the i.i.d. $Exp(1)$  distribution, and for OSC under the i.i.d. $N(0,1)$ distribution (results are similar for other settings, and are omitted for brevity). For MC and IT-RSS, the dotted lines indicate average error decay, and the shaded bands mark the area between the 25-th and 75-th error quantiles. Two observations can be made here. First, for all choices of $n$, support points enjoy considerably reduced errors compared to the averages of both MC and IT-RSS, with the proposed method providing an improvement to the 25-th quantiles of IT-RSS for most settings. Second, this advantage over MC and IT-RSS persists in both low and moderate dimensions. In view of the {relief from dimensionality} enjoyed by IT-RSS, this gives some evidence that support points \textit{may} enjoy a similar property as well, a stronger assertion than is provided in Theorem \ref{thm:convrate} or \ref{thm:compact}. Exploring the theoretical performance of support points in high dimensions will be an interesting direction for future work.

In summary, for point set sizes as large as $10,000$ points in dimensions as large as 500, simulations show that support points can be {efficiently generated} and enjoy {improved performance} over MC and IT-RSS. This opens up a wide range of important applications for support points in both small-data and big-data problems, two of which we describe next.

\section{Applications of support points}
\label{sec:app}

\subsection{Uncertainty propagation in expensive simulations}

\begin{figure}[t]
\centering
\includegraphics[width=0.9\textwidth]{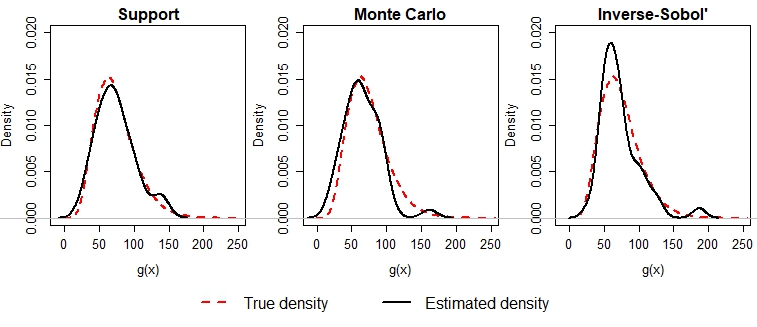}
\vspace{-0.4cm}
\caption{True and estimated density functions for $g(\bm{X})$ using $n=60$ points.}
\label{fig:boreholedens}
\end{figure}

We first highlight an important small-data application of support points in simulation. With the development of powerful computational tools, computer simulations are becoming the de-facto method for conducting engineering experiments. For such simulations, a key point of interest is \textit{uncertainty propagation}, or how uncertainty in input variables (resulting from, say, manufacturing tolerances) propagate and affect output variability. Mathematically, let $g(\bm{x})$ be the observed output at input setting $\bm{x}$, and let $\bm{X} \sim F$ denote input uncertainties. The distribution $g(\bm{X})$ can then be seen as the resulting uncertainty on system output. For engineers, the estimation of $g(\bm{X})$ using as few simulation runs as possible is of great importance, because each run can be computationally and monetarily expensive.

To demonstrate the effectiveness of support points for this problem, we use the \textit{borehole physical model} \citep{Wor1987}, which simulates water flow rate through a borehole. The 8 input variables for this model, along with their corresponding uncertainty distributions (assumed to be mutually independent), are summarized in Appendix C of the supplemental article \citep{MJ2017supp}. To reflect the expensive cost of simulations, we test only small point set sizes ranging from $n=20$ to $n=100$ runs. Support points are generated using \texttt{sp.sccp} with the same settings as before, with the randomized MC and IT-RSS methods replicated for 100 trials.

Consider now the estimation of the output distribution $g(\bm{X})$, which quantifies the uncertainty in water flow rate. Figure \ref{fig:boreholedens} compares the estimated density function of $g(\bm{X})$ using $n=60$ points with its true density, where the latter estimated using a large Monte Carlo sample. Visually, support points provide the best density approximation for $g(\bm{X})$, capturing well both the peak and tails of the desired output distribution. This suggests support points are not only asymptotically consistent for density estimation, but may also be optimal in some sense. A similar conclusion holds in the estimation of the expected flow rate $\mathbb{E}[g(\bm{X})]$ (see Appendix C of the supplemental article \citep{MJ2017supp}).


\subsection{Optimal MCMC reduction}

\begin{table}[t]
\centering
\begin{tabular}{c|c||c|c|c|c}
\toprule
\textit{Parameter} & \textit{Prior} & $R_{\mu}(375)$ &  $R_{\mu}(750)$ &  $R_{\sigma^2}(375)$ &  $R_{\sigma^2}(750)$ \\
\hline
$\phi_{i1}$ & $\log \phi_{i1} \distas{indep.} N(\mu_1,\sigma^2_1)$ & 2.27 & 2.75 & 15.89 & 6.37 \\
$\phi_{i2}$ & $\log (\phi_{i2}+1) \distas{indep.} N(\mu_2,\sigma^2_2)$ & 2.10 & 3.58 & 18.01 & 2.47 \\
$\phi_{i3}$ & $\log (-\phi_{i3}) \distas{indep.} N(\mu_3,\sigma^2_3)$ & 1.59 & 2.23& 11.90 & 102.49 \\
$\sigma^2_C$ & $ \sigma^2_C \sim \text{Inv-Gamma}(0.001,0.001)$ & 0.98 & 2.80 & 6.15 & 7.69 \\
\hline
$r(1600)$ & \multirow{3}{*}{\large{$r(t) = \frac{1}{5} \sum_{i=1}^5 \left. \frac{\partial}{\partial s} \eta_i(s) \right|_{s = t}$}} & 1.95 & 3.17 & - & - \\
$r(1625)$ & & 2.30 & 3.28 & - & - \\
$r(1650)$ & & 2.51 & 3.04 & - & - \\
\hline
$\mu_j$ & $\mu_j \distas{i.i.d.} N(0,100)$ & - & - & - & - \\
$\sigma^2_j$ & $\sigma^2_j \distas{i.i.d.} \text{Inv-Gamma}(0.01,0.01)$ & - & - & - & - \\
\toprule
\end{tabular}
\caption{Prior specification for the tree growth model (left), and the ratios of thinning over support point error for posterior quantities (right). $R_\mu(n)$ and $R_{\sigma^2}(n)$ denote the error ratios for posterior means and variances using $n$ points, respectively.}
\label{tbl:mcmc}
\end{table}

The second application of support points is as an improved alternative to MCMC thinning for Bayesian computation. {Thinning} here refers to the {discarding of all but every $k$-th sample} for an MCMC sample chain obtained from the posterior distribution. This is performed for several reasons (see \citep{LE2012}): it reduces high autocorrelations in the MCMC chain, saves computer storage space, and reduces processing time for computing derived posterior quantities. However, by carelessly throwing away samples, a glaring fault of thinning is that samples from thinned chains are inherently less accurate than that from the full chain. To this end, the proposed algorithm \texttt{sp.ccp} can provide considerable improvements to thinning by {optimizing} for a point set which best captures the distribution of the {full} MCMC chain.

We illustrate this improvement using the orange tree growth model in \citep{DS1981}. The data here consists of trunk circumference measurements ${\{Y_{i}(t_j)\}_{i=1}^5}_{j=1}^7$, where $Y_{i}(t_j)$ denotes the measurement taken on day $t_j$ from tree $i$. To model these measurements, the growth model $Y_{i}(t_j) \distas{indep.} N(\eta_{i}(t_j), \sigma^2_C), \eta_{i}(t_j) = {\phi_{i1}}/({1 + \phi_{i2} \exp\{ \phi_{i3} t_j \}})$ was assumed in \citep{DS1981}, where $\phi_{i1}$, $\phi_{i2}$ and $\phi_{i3}$ control the growth behavior of tree $i$. There are 16 parameters in total, which we denote by the set $\Theta = (\phi_{11}, \phi_{12}, \cdots, \phi_{53}, \sigma^2)$. Since no prior information is available on $\Theta$, vague priors are assigned, with the full specification provided in the left part of Table \ref{tbl:mcmc}. MCMC sampling is then performed for the posterior distribution using the R package \texttt{STAN} \citep{stan2015}, with the chain run for 150,000 iterations and the first 75,000 of these discarded as burn-in. The remaining $N = 75,000$ samples are then thinned at a rate of 200 and 100, giving $n = 375$ and $n=750$ thinned samples, respectively. Support points are generated using \texttt{sp.ccp} for the same choices of $n$, using the full MCMC chain as the approximating sample $\{\bm{y}_m\}_{m=1}^N$. Since posterior variances vary greatly between parameters, we first rescale each parameter in the MCMC chain to unit variance before performing \texttt{sp.ccp}, then scale back the resulting support points after.

These two methods are then compared on how well they estimate two quantities: (a) marginal posterior means and standard deviations of each parameter, and (b) the {averaged instantaneous growth rate} $r(t)$ (see Table \ref{tbl:mcmc}) at three future times. True posterior quantities are estimated by running a longer MCMC chain with 600,000 iterations. This comparison is summarized in the right part of Table \ref{tbl:mcmc}, which reports the ratios of thinning over support point error for each parameter. Keeping in mind that a ratio exceeding 1 indicates lower errors for support points, one can see that \texttt{sp.ccp} provides a sizable improvement over thinning for nearly all posterior quantities. Such a result should not be surprising, because \texttt{sp.ccp} compacts the \textit{full} MCMC chain into a set of optimal representative points, whereas thinning wastes valuable information by discarding a majority of this chain. 

\section{Conclusion and future work}
\label{sec:conc}

In this paper, a new method is proposed for compacting a continuous distribution $F$ into a set of representative points called support points, which are defined as the minimizer of the energy distance in \citep{SZ2013}. Three theorems are proven here which justify the use of these point sets for integration. First, we showed that support points are indeed representative of the desired distribution, in that these point sets converge in distribution to $F$. Second, we provided a Koksma-Hlawka-like bound which connects integration error with the energy distance for a large class of integrands. Lastly, using an existence result, we demonstrated the theoretical error improvement of support points over Monte Carlo. A key appeal of support points is its formulation as a difference-of-convex optimization problem. The two proposed algorithms, \texttt{sp.ccp} and \texttt{sp.sccp}, exploit this structure to efficiently generate moderate-sized point sets ($n \leq 10,000$) in moderately-high dimensions ($p \leq 500$). Simulations confirm the improved performance of support points to MC and a specific QMC method, and the practical applicability of the proposed point set is illustrated using two real-world applications, one for small-data and the other for big-data. An efficient C++ implementation of \texttt{sp.ccp} and \texttt{sp.sccp} is made available in the R package \texttt{support} \citep{Mak2017}.

While the current paper establishes some interesting results for support points, there are still many exciting avenues for future research. First, we are interested in exploring a tighter convergence rate for support points which reflects its empirical performance from simulations, particularly for high-dimensional problems. Next, the d.c. formulation of the energy distance can potentially be further exploited for the global optimization of support points. Moreover, by minimizing the \textit{distance-based} energy distance, support points also have an inherent link to the \textit{distance-based} designs used in computer experiments \citep{Sea2013,Jea2015b,MJ2017}, and exploring this connection may reveal interesting insights between the two fields, and open up new approaches for uncertainty quantification in engineering \citep{Mea2017} and machine-learning \citep{MX2017} problems. Lastly, motivated by \cite{Hic1998} and \cite{Jea2015b}, rep-points in high-dimensions should not only provide a good representation of the full distribution $F$, but also for \textit{marginal} distributions of $F$. Such a projective property is enjoyed by most QMC point sets in the literature \citep{Dea2013}, and new methodology is needed to incorporate this within the support points framework.

\section*{Acknowledgments}
The authors gratefully acknowledge helpful advice from an anonymous referee, the associate editor and Prof. Rui Tuo. This research is supported by the U. S. Army Research Office under grant number W911NF-14-1-0024.

\begin{supplement}
\sname{Supplement A}
\stitle{Additional proofs and results}
\slink[doi]{COMPLETED BY THE TYPESETTER}
\sdatatype{.pdf}
\sdescription{We provide in this supplement further details on technical results and simulation studies.}
\end{supplement}

\small
\bibliography{references}
\normalsize

\end{document}